\begin{document}

\newtheorem{theorem}{Theorem}
\newtheorem{lemma}[theorem]{Lemma} 
\newtheorem{claim}[theorem]{Claim}
\newtheorem{cor}[theorem]{Corollary}
\newtheorem{prop}[theorem]{Proposition}
\newtheorem{rem}[theorem]{Remark}
\newtheorem{definition}{Definition}
\newtheorem{quest}[theorem]{Open Question}

\newtheorem{example}[theorem]{Example}

\numberwithin{equation}{section}
\numberwithin{theorem}{section}

\def \blambda{\bm{\lambda}}

 \newcommand{\F}{\mathbb{F}}
\newcommand{\K}{\mathbb{K}}
\newcommand{\C}{\mathbb{C}}
\newcommand{\N}{\mathbb{N}}
\newcommand{\D}[1]{D\(#1\)}
\def\scr{\scriptstyle}
\def\\{\cr}
\def\({\left(}
\def\){\right)}
\def\[{\left[}
\def\]{\right]}
\def\<{\langle}
\def\>{\rangle}
\def\fl#1{\left\lfloor#1\right\rfloor}
\def\rf#1{\left\lceil#1\right\rceil}
\def\le{\leqslant}
\def\ge{\geqslant}
\def\eps{\varepsilon}
\def\mand{\qquad\mbox{and}\qquad}
\def\ep{\mathbf{e}_p}
\def\e{\mathbf{e}}
\def\vec#1{\mathbf{#1}}

\def\newu{m}

\newcommand{\commA}[1]{\marginpar{%
\vskip-\baselineskip 
\raggedright\footnotesize
\itshape\hrule\smallskip\color{blue}A.: #1\par\smallskip\hrule}}

\newcommand{\commR}[1]{\marginpar{%
\vskip-\baselineskip 
\raggedright\footnotesize
\itshape\hrule\smallskip\color{red}R.: #1\par\smallskip\hrule}}

\newcommand{\commI}[1]{\marginpar{%
\vskip-\baselineskip 
\raggedright\footnotesize
\itshape\hrule\smallskip\color{magenta}I.: #1\par\smallskip\hrule}}

\newcommand{\commII}[1]{\marginpar{%
\vskip-\baselineskip 
\raggedright\footnotesize
\itshape\hrule\smallskip\color{green}I.: #1\par\smallskip\hrule}}

\newcommand{\Fq}{\mathbb{F}_q}
\newcommand{\Q}{\mathbb{Q}}
\newcommand{\cS}{\mathcal{S}}
\newcommand{\Fp}{\mathbb{F}_p}
\newcommand{\Z}{\mathbb{Z}}
\newcommand{\PP}{\mathbb{P}}
\newcommand{\Disc}[1]{\operatorname{Disc}\(#1\)}
\newcommand{\Res}[1]{\operatorname{Res}\(#1\)}

\def\cA{{\mathcal A}}
\def\cB{{\mathcal B}}
\def\cC{{\mathcal C}}
\def\cD{{\mathcal D}}
\def\cE{{\mathcal E}}
\def\cF{{\mathcal F}}
\def\cG{{\mathcal G}}
\def\cH{{\mathcal H}}
\def\cI{{\mathcal I}}
\def\cJ{{\mathcal J}}
\def\cK{{\mathcal K}}
\def\cL{{\mathcal L}}
\def\cM{{\mathcal M}}
\def\cN{{\mathcal N}}
\def\cO{{\mathcal O}}
\def\cP{{\mathcal P}}
\def\cQ{{\mathcal Q}}
\def\cR{{\mathcal R}}
\def\cS{{\mathcal S}}
\def\cT{{\mathcal T}}
\def\cU{{\mathcal U}}
\def\cV{{\mathcal V}}
\def\cW{{\mathcal W}}
\def\cX{{\mathcal X}}
\def\cY{{\mathcal Y}}
\def\cZ{{\mathcal Z}}

\def\fM{{\mathfrak M}}

\newcommand{\Nm}[1]{\mathrm{Norm}_{\,\F_{q^k}/\Fq}(#1)}

\def\Tr{\mbox{Tr}}
\newcommand{\rad}[1]{\mathrm{rad}(#1)}

\title[Fields Generated by 
Polynomials of Given Height]{Discriminants of Fields Generated by 
Polynomials of Given Height}

\author{Rainer Dietmann} 
\address{Department of Mathematics, Royal Holloway, University of London, Egham, Surrey, TW20 0EX, United Kingdom} 
\email{rainer.dietmann@rhul.ac.uk}
\author{Alina Ostafe} 
\address{Department of Pure Mathematics, University of New South Wales, 
Sydney, NSW 2052, Australia}
\email{alina.ostafe@unsw.edu.au}
\author{Igor E. Shparlinski} 
\address{Department of Pure Mathematics, University of New South Wales, 
Sydney, NSW 2052, Australia}
\email{igor.shparlinski@unsw.edu.au}

\begin{abstract}
We obtain upper bounds for the number of monic irreducible polynomials over $\Z$ of a fixed degree $n$ and a
growing height $H$ for which the field generated by one of its roots has a given discriminant.
We approach it via counting square-free parts of polynomial discriminants via two complementing approaches.
In turn, this leads to a lower bound on the number of distinct discriminants
of  fields generated by roots of polynomials of degree $n$ and height at most $H$. We also give an upper bound for the number of trinomials of bounded height with given square-free part of the discriminant, improving previous results of 
I.~E.~Shparlinski~(2010).\end{abstract}

\maketitle

\section{Introduction}

\subsection{Motivation and background}

For a positive integer $H$, we use $\cP_n(H)$ to denote the set 
of polynomials
\begin{align*}
\cP_n(H) = \{X^n+a_{n-1}X^{n-1} + \ldots+a_1&X+a_0 \in \Z[X]~:\\
& ~|a_0|, \ldots, |a_{n-1}| <H \}.
\end{align*}

Furthermore, we use $\cI_n(H)$ to denote the set of irreducible polynomials 
from  $\cP_n(H)$. It is useful to recall that 
$$
\# \cI_n(H) = 2^nH^{n} + O(H^{n-1}), 
$$ 
which follows immediately
from much more precise results of Chela~\cite{Chela},  Dietmann~\cite{Diet1,Diet2}
and Zywina~\cite{Zyw}. We also note that Bhargava~\cite{Bha} has recently established the celebrated 
{\it  van der Waerden conjecture\/} about Galois groups of polynomials from $\cI_n(H)$. 

For  an irreducible monic polynomial  $f\in \Z[X]$ we use
$\Delta(f)$ to denote the discriminant 
of the algebraic number field
$\Q(\alpha)$, where  $\alpha$ is a root of $f$
(clearly, for any  $f \in \cI_n(H)$ all such fields $\Q(\alpha)$ are isomorphic
and thus have the same discriminant). 

For an integer $\Delta$ we denote by $N_n(H,\Delta)$ the number of polynomials 
$f \in \cI_n(H)$ with $\Delta(f) = \Delta$.

We recall that various counting problems for discriminants of number 
fields have been studied in a number of works, see~\cite{BSW, BBP, ILOSS, Jones1, Jones2, JoWh, ElVe, LaRo}
and references therein. In particular, 
a remarkable result of Bhargava,  Shankar, and Wang~\cite{BSW}
gives an asymptotic formula 
for the density of polynomials with square-free discriminants, however their model of counting
is different from ours. 
In fact, it seems that   the function $N_n(H,\Delta)$  which is our main object 
of study, 
has never been investigated before. 

We derive our estimates from some counting results 
on  square-free parts of discriminants of the polynomials from $\cP_n(H)$. 
We recall that the square-free part $u$ of an integer  $k$ is defined by $k = uv^2$ where $v^2$ 
is the largest perfect square dividing $k$. In particular, $u$ has the same sign as $k$. 

We remark that, despite the recent progress in~\cite{BSW}, the problem of counting square-free  
discriminants of the 
polynomials from $\cP_n(H)$ still remains
open, unless one assumes the celebrated $ABC$-conjecture, see~\cite{Kedl,Poon}.
So, one can consider our result as a first approximation to the desired goal. 

Furthermore, some counting results about square-free parts of discriminants 
\begin{equation}
\label{eq:DiscTrin}
  \Delta_n(a,b)=(n-1)^{n-1} a^n+n^n b^{n-1}
\end{equation}
of trinomials $X^n + aX + b$ with $n \equiv 1 \pmod 4$
have been given
in~\cite{MMS} (conditionally under the $ABC$-conjecture) and in~\cite{Shp1} (unconditionally). 
Here we obtain a new bound, improving that of~\cite{Shp1} for a wide range of parameters.

In fact our bound is a combination of two results, which we use depending on the relative 
sizes of parameters. One result is obtained via the {\it determinant method\/} of Bombieri and
Pila~\cite{BoPi}, Heath-Brown~\cite{HB3} and Salberger~\cite{S},
as in the work of Dietmann~\cite{Diet2}.  The other one  is based 
on the  {\it square sieve\/}
of Heath-Brown~\cite{HB1}, see Section~\ref{sec:SqSieve}, combined with bounds on character sums 
with discriminants,  see   Lemma~\ref{lem:CharSum p}, which are better than 
those directly implied by the Weil bound (see, for example,~\cite[Theorem~11.23]{IwKow}). 
We believe such bounds can be of independent interest.

\subsection{Notation}

We recall that
the expressions $A \ll B$,  $B \gg A$ and $A=O(B)$ are each equivalent to the
statement that $|A|\le cB$ for some positive constant $c$. We use $o(1)$ to denote any expression
that tends to $0$ for a fixed $n$ and $H\to\infty$.

 Throughout the paper, 
the implied constants in these symbols may depend on the degree $n$
of the polynomials involved, and occasionally, when mentioned explicitly, 
on some other parameters. 

The letters $p$ and $q$ always denote prime numbers.

\subsection{Discriminants of general polynomials}

Our  main result is the following upper bound on $N_n(H,\Delta)$, 
which is obtained by a combination of various techniques. 

\begin{theorem}
\label{thm:NHDelta}
Let $\Delta$ be a non-zero integer and $n \ge 3$.
Then, uniformly over $\Delta$, 
if for the square-free part $u$ of $\Delta$ neither $|u|(n-1)^{n-1}$ nor
$|u|n^n$ is a square, then
\begin{equation}
\label{eq:nn-1 discr}
N_n(H,\Delta)\le  H^{n-2+\sqrt{2}+o(1)},
\end{equation}
otherwise
\begin{equation}
\label{eq:Any discr}
N_n(H,\Delta)\ll \begin{cases}H^{n-2n/(3n+3)}(\log H)^{(5n+1)/(3n+3)} & \text{if}\ n \ge 5\\
 H^{n-n/(2n-1)}(\log H)^{(3n-2)/(2n-1)} & \text{if}\ n =3, 4.
 \end{cases} 
\end{equation}
for any $\Delta$. 
\end{theorem}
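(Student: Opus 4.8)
The plan is to convert the field-theoretic count into a question about polynomial discriminants and then run the two methods advertised in the introduction. First I would use the relation $\Disc{f}=[\mathcal{O}_K:\Z[\alpha]]^{2}\,\Delta(f)$ between the discriminant of the power order $\Z[\alpha]$ and that of the field $K=\Q(\alpha)$: since the index enters as a square, the square-free part of $\Disc{f}$ equals the square-free part $u$ of $\Delta=\Delta(f)$, and hence $u\,\Disc{f}=[\mathcal{O}_K:\Z[\alpha]]^{2}\,u\Delta$ is a perfect square for every $f$ counted by $N_n(H,\Delta)$. Since dropping irreducibility only enlarges the count,
\begin{equation*}
N_n(H,\Delta)\le\#\{f\in\cP_n(H)\colon u\,\Disc{f}\text{ is a perfect square}\},
\end{equation*}
and it suffices to bound the right-hand side, the $O(H^{n-1})$ polynomials with $\Disc{f}=0$ being negligible for both target bounds. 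Everything now reduces to bounding the number of $(a_0,\dots,a_{n-1})$ in the box $|a_i|<H$ for which $u\,\Disc{f}$ is a square.

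For the first bound I would apply the \emph{determinant method} of Bombieri--Pila, Heath-Brown and Salberger~\cite{BoPi,HB3,S}, in the form developed by Dietmann~\cite{Diet2}. Introducing $w$, one counts the integer points of the affine hypersurface
\begin{equation*}
\mathcal V\colon\quad u\,\Disc{f}=w^{2}\qquad\text{in }\mathbb{A}^{n+1}
\end{equation*}
whose first $n$ coordinates lie in the box (here $w$ is determined up to sign, and $\Disc{f}$ has total degree $2n-2$ in the $a_i$). The determinant method forces all these points onto few auxiliary hypersurfaces, and optimising the degree of the auxiliary forms is precisely what produces the exponent $n-2+\sqrt2$. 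The hypotheses that neither $|u|(n-1)^{n-1}$ nor $|u|n^{n}$ is a square enter exactly to prevent the points from concentrating on degenerate subloci: the monomials $a_1^{n}$ and $a_0^{n-1}$ occur in $\Disc{f}$ with the corner coefficients $(n-1)^{n-1}$ and $n^{n}$, as one sees by specialising to the trinomial~\eqref{eq:DiscTrin}, so along the coordinate directions the twisted equation $u\cdot(\text{leading form})=w^{2}$ has no rational solution family precisely when these quantities are non-squares; this keeps the sections and fibres arising in the method irreducible and free of the low-degree rational curves that would otherwise contribute $\gg H^{n-1}$ points. With this in hand, Dietmann's analysis yields $H^{n-2+\sqrt2+o(1)}$.

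For the bound valid for every $\Delta$ I would use Heath-Brown's \emph{square sieve}~\cite{HB1} (recalled in Section~\ref{sec:SqSieve}) applied to the multiset $\{u\,\Disc{f}:f\in\cP_n(H)\}$. With a set $\mathcal Q$ of $Q$ primes of size about $P$, the sieve bounds the number of square values, up to admissible lower-order terms, by
\begin{equation*}
\frac{H^{n}}{Q}+\frac{1}{Q^{2}}\sum_{p,q\in\mathcal Q,\ p\ne q}\left|\sum_{f\in\cP_n(H)}\left(\frac{u\,\Disc{f}}{pq}\right)\right|.
\end{equation*}
Splitting $\left(\frac{\cdot}{pq}\right)=\left(\frac{\cdot}{p}\right)\left(\frac{\cdot}{q}\right)$ and completing the box $|a_i|<H$ into complete residue systems modulo $p$ and modulo $q$, the inner sum is controlled by complete multiplicative character sums of $\Disc{f}$ to prime moduli. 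Here I would invoke Lemma~\ref{lem:CharSum p}, whose estimate is sharper than the Weil bound for these discriminant character sums and therefore governs the whole calculation. Substituting it, summing over $p$ and $q$, and balancing the error against the main term $H^{n}/Q$ by the optimal choice of $P$ yields $H^{n-n/(2n-1)}(\log H)^{(3n-2)/(2n-1)}$; since no condition on $u$ was used, this holds for all $\Delta$.

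The crux lies in the two discriminant-specific inputs. For the determinant-method bound the delicate step is the geometric bookkeeping around $\mathcal V$: one must verify that the non-square hypotheses on $|u|(n-1)^{n-1}$ and $|u|n^{n}$ genuinely eliminate every degenerate contribution, so that the clean exponent $n-2+\sqrt2$ survives — this is exactly where the corner coefficients of $\Disc{f}$ become decisive. For the square-sieve bound the main obstacle is Lemma~\ref{lem:CharSum p} itself: squeezing cancellation beyond Weil out of $\sum\left(\frac{u\,\Disc{f}}{p}\right)$ is what makes the saving $H^{n/(2n-1)}$ attainable rather than the weaker $H^{1/2}$ that a direct appeal to Weil would give, and the incomplete sums produced by completion must be controlled uniformly in $p$ and $q$. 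The remaining ingredients — the reduction through the index, the sieve bookkeeping, and the final optimisation — are routine.
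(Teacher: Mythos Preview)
Your proposal is correct and follows essentially the same route as the paper: the reduction via Lemma~\ref{lem:D/Delta} to counting $f$ with $u\,\Disc{f}$ a square, then the determinant method of~\cite{Diet2} (with the non-square hypotheses on $|u|(n-1)^{n-1}$ and $|u|n^n$ feeding into the irreducibility Lemmas~\ref{l1}--\ref{l4} that replace~\cite[Lemmas~5, 6, 8, 11]{Diet2}) for~\eqref{eq:nn-1 discr}, and Heath-Brown's square sieve driven by the beyond-Weil character-sum bound of Lemma~\ref{lem:CharSum p} (packaged through Lemmas~\ref{lem:CharSum m} and~\ref{lem:CharSum m H}) for~\eqref{eq:Any discr}. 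Your identification of where the corner coefficients $(n-1)^{n-1}$ and $n^n$ enter, and your back-of-envelope that Weil alone would only save $H^{1/2}$ whereas Lemma~\ref{lem:CharSum p} yields the saving $H^{n/(2n-1)}$, both match the paper's analysis.
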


We note that the bound~\eqref{eq:nn-1 discr} is better (when it applies) than~\eqref{eq:Any discr}
only for $n \le 7$.

Let 
$$
M_n(H,D)= \sum_{|\Delta| \le D}   N_n(H,\Delta).
$$ 

Given a real parameter $D\ge 1$,
using that there are $O(D^{1/2})$ values of $\Delta\le D$
with  the square-free part  $u$ satisfying $|u|(n-1)^{n-1}$ or
$|u|n^n$ being a square, we immediately obtain that uniformly over $D$, 
$$
M_n(H,D) \le H^{o(1)} 
 \begin{cases}D H^{n-2n/(3n+3) } & \text{if}\ n \ge 8,\\
  DH^{n-2+\sqrt{2} } + D^{1/2} H^{n-2n/(3n+3) } & \text{if}\   n =5,6,7,\\
 DH^{n-2+\sqrt{2}} + D^{1/2} H^{n-n/(2n-1)} & \text{if}\ n =3, 4.
 \end{cases} 
$$

However one can get a better result. 
\begin{theorem}
\label{thm:MHD}
Let $D\ge 1$ be an integer. Then, for $n \ge 5$, uniformly over $D$, we have
$$
M_n(H,D) \le D^{(3n+2)/(3n+3)}  H^{n(3n+1)/(3n+3)+o(1)} .
$$
\end{theorem}

 Clearly, Theorem~\ref{thm:MHD} is nontrivial (that is, improves the trivial bound  $M_n(H,D)\ll H^n$) 
 provided that $D \le H^{2n/(3n+2) - \varepsilon}$ for some fixed $\varepsilon>0$. In particular, we see that 
 almost all polynomials from 
$\cI_n(H)$ generate fields with discriminants of size at least $H^{2n/(3n+2)+o(1)}$.

We also note very recent results of Anderson,   Gafni,   Lemke Oliver,   Lowry-Duda,   Shakan, and  Zhang~\cite{AGLLSZ} 
about the arithmetic structure of discriminants of polynomials from $\cP_n(H)$.

\begin{rem}
\label{rem:disc split}  
We note that the bound of Theorem~\ref{thm:NHDelta}   also implies an
upper bound for the number $K_n(H,\delta)$ of polynomials $f\in\cI_n(H)$ such that the discriminant $\delta(f)$ of the splitting field $L_f$ of $f$ is $\delta$. Indeed, for a given $f\in \cI_n(H)$, we have the divisibility 
$\Delta(f)\mid\delta(f)$.  
Thus, using $\delta(f)  = H^{O(1)}$ for $f\in\cI_n(H)$ and the classical bound
$\tau(\delta) = \delta^{o(1)}$ for the divisor function $\tau$,  we obtain
$$
K_n(H,\delta)\le \sum_{\Delta \mid\delta}
  N_n(H, \Delta) \le  H^{o(1)}  \begin{cases}H^{n-2n/(3n+3) }  & \text{if}\ n \ge 5,\\
 H^{n-n/(2n-1) }  & \text{if}\ n =3, 4.
 \end{cases} 
$$
Similarly, we also have  an analogue of Theorem~\ref{thm:MHD}  for $K_n(H,\delta)$. 
\end{rem}

\subsection{Discriminants of trinomials}
\label{sec:discr trinom}

Let $T_n(A,B,C,D;u)$ be the number of pairs of integers
$(a,b) \in [C,C+A] \times [D,D+B]$ such that for the trinomial discriminant~\eqref{eq:DiscTrin}
we have 
$\Delta_n(a,b) =ur^2$,
for some positive integer $r$. 
 
 For $n \equiv 1 \pmod 4$,
$A \ge 1$, $B \ge 1$, $C \ge 0$, $D \ge 0$ and square-free $u$, Shparlinski~\cite[Theorem~1]{Shp1} has obtained the bound
\begin{align*}
  T_n(A,B,C,D;u) & \ll (AB)^{2/3} (\log (AB))^{4/3} +
  (A+B) (\log(AB))^2\\
  & \qquad \qquad \qquad + (AB)^{1/3}
  \left( \frac{\log(ABCD) \log (AB)}{\log \log (ABCD)} \right)^2,
\end{align*}
using exponential sums and the square sieve. For $C \ge 1$ and
$A \ll B^{2-\varepsilon}$ with an arbitrary fixed $\varepsilon > 0$, we can sharpen this as follows.

\begin{theorem}
\label{thm:trinom}
Let $n \equiv 1 \pmod 4$, $n \ge 2$,
$A \ge 1$, $B \ge 1$, $C \ge 1$, $D \ge 0$,
and let $u$ be square-free. Then
$$
  T_n(A,B,C,D;u) 
  \le A (A+B+C+D)^{o(1)}.
$$
\end{theorem}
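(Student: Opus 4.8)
The plan is to fix the value of $a$ and to bound, uniformly in $a$, the number of admissible $b$. Since $a$ runs over the $\lfloor A\rfloor+1\le 2A$ integers of $[C,C+A]$, a per-$a$ bound of the shape $(A+B+C+D)^{o(1)}$ summed over these values already yields the claimed $A(A+B+C+D)^{o(1)}$. Thus everything reduces to the following assertion: for each fixed integer $a$ with $a\ge C\ge 1$, the number of integers $b\in[D,D+B]$ for which $(n-1)^{n-1}a^n+n^nb^{n-1}=sr^2$ holds for some positive integer $r$ is at most $(A+B+C+D)^{o(1)}$.

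The key structural observation I would exploit is the hypothesis $n\equiv 1\pmod 4$, which makes $n-1$ even. Writing $w=b^{(n-1)/2}$, a non-negative integer since $b\ge D\ge 0$, turns $b^{n-1}=w^2$ into a perfect square and converts the equation into the Pell-type relation
$$
sr^2-n^nw^2=(n-1)^{n-1}a^n=:K .
$$
Here the hypothesis $C\ge 1$ is essential: it forces $a\ge 1$ and hence $K\ne 0$, so the binary quadratic form $Q(X,Y)=sX^2-n^nY^2$ represents the fixed non-zero integer $K$. Since $\Delta_n(a,b)=sr^2>0$, the square-free part satisfies $s\ge 1$, so $Q$ is indefinite of discriminant $4sn^n>0$, while $0<K\le(n-1)^{n-1}(A+B+C+D)^n=(A+B+C+D)^{O(1)}$. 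As distinct admissible $b$ produce distinct values $w=b^{(n-1)/2}$ and hence distinct solutions $(r,w)$, it now suffices to bound the number of solutions of $Q(r,w)=K$ with $r\ge 1$ and $0\le w\le(D+B)^{(n-1)/2}=:Y_0=(A+B+C+D)^{O(1)}$.

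To count these I would invoke the classical theory of representations of an integer by a fixed indefinite binary quadratic form. The solutions fall into finitely many orbits under the automorphism group of $Q$; after separating out the greatest common divisor of the coordinates, the number of orbits is bounded by the number of square roots of $4sn^n$ modulo $4|K|$, hence by $2^{\omega(|K|)+O(1)}=|K|^{o(1)}=(A+B+C+D)^{o(1)}$. Inside a single orbit the solutions are generated by repeated application of the fundamental automorph of $Q$, which multiplies $(r,w)$ by the fundamental unit $\eta$ of the order of discriminant $4sn^n$; since $\eta\ge\sqrt{4sn^n}\ge 2n^{n/2}$ we have $\log\eta\gg_n 1$, so the number of solutions in one orbit with $w\le Y_0$ is $O(1+\log Y_0/\log\eta)=O_n(\log Y_0)=(A+B+C+D)^{o(1)}$. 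Multiplying the two estimates gives the required per-$a$ bound.

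The step that will demand the most care is the uniformity of this representation count in the two varying parameters $s$ and $K$. Two things must be controlled at once: first, that the number of representation orbits does not secretly carry the class number of the large discriminant $4sn^n$, which it does not precisely because $Q$ is a single fixed form rather than a genus being summed over; and second, that the fundamental unit $\eta$ stays bounded away from $1$ uniformly in $s$, so that the geometric growth within each orbit keeps the number of small solutions logarithmic. Both facts are standard consequences of the reduction theory of indefinite forms, but I would need to quote them in a shape that is explicitly uniform in $s$ and $K$; once that is secured, the per-$a$ bound $(A+B+C+D)^{o(1)}$, and with it the theorem, follows.
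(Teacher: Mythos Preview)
Your approach is correct and essentially identical to the paper's: fix $a$, substitute $w=b^{(n-1)/2}$ (exploiting $n\equiv 1\pmod 4$), and reduce to counting integer points of bounded height on the conic $sr^2-n^nw^2=(n-1)^{n-1}a^n$ with nonzero right-hand side. The only differences are cosmetic: the paper also treats the case where $-sn^n$ is a perfect square (vacuous here, since $C\ge 1$ and $D\ge 0$ force $\Delta_n(a,b)>0$ and hence $s>0$), and rather than sketching the orbit/fundamental-automorph argument it simply invokes a ready-made lemma on integer points of bounded height on an irreducible conic.
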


As in~\cite{Shp1}, from this we obtain the following two results:

\begin{cor}
\label{cor:Sn}
In the notation of Theorem~\ref{thm:trinom}, let $S_n(A,B,C,D)$ be the
number of distinct quadratic fields $\Q(\sqrt{\Delta_n(a,b)})$
taken for all pairs of integers $(a,b) \in [C,C+A] \times
[D,D+B]$ such that $X^n+aX+b$ is irreducible over $\Q$.
Then under the assumptions of Theorem~\ref{thm:trinom}, we have
$$
  S_n(A,B,C,D) \ge B (A+B+C+D)^{o(1)}.
$$
\end{cor}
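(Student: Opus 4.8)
The plan is to deduce Corollary~\ref{cor:Sn} from Theorem~\ref{thm:trinom} by a pigeonhole argument, once a sufficiently large supply of irreducible trinomials has been produced. First I would record that for any pair $(a,b)$ the field $\Q(\sqrt{\Delta_n(a,b)})$ depends only on the signed square-free part $u$ of $\Delta_n(a,b)$, and that two pairs determine the same field precisely when they share this square-free part, since distinct square-free integers $u$ give distinct fields $\Q(\sqrt{u})$. Hence $S_n(A,B,C,D)$ equals the number of distinct square-free parts $u$ arising from the admissible pairs. Note also that if $X^n+aX+b$ is irreducible over $\Q$ then it is separable, so $\Delta_n(a,b)\ne 0$ and its square-free part is well defined and nonzero.

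Grouping the irreducible pairs in the box according to their square-free part $u$, each class has size at most $T_n(A,B,C,D;u)$, which by Theorem~\ref{thm:trinom} is at most $A(A+B+C+D)^{o(1)}$. Writing $N$ for the number of irreducible pairs in the box, this yields
\[
S_n(A,B,C,D)\ge \frac{N}{A\,(A+B+C+D)^{o(1)}}.
\]
It then remains to show $N\gg AB$. Here I would invoke Eisenstein's criterion: for a fixed prime $p$, any pair with $p\mid a$ and $p\mid b$ but $p^2\nmid b$ gives an irreducible trinomial $X^n+aX+b$, because $a$ and $b$ are its only non-leading coefficients. Counting the integers $a\in[C,C+A]$ divisible by one of the small primes $2,3,5$, and for each such $a$ counting the integers $b\in[D,D+B]$ with $p\mid b$, $p^2\nmid b$ for a chosen $p\in\{2,3,5\}$ dividing $a$, produces $\gg A$ choices of $a$ each admitting $\gg B$ choices of $b$, whence $N\gg AB$. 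Substituting into the displayed inequality gives $S_n(A,B,C,D)\ge B\,(A+B+C+D)^{o(1)}$, as claimed (the at most one degenerate value $u=1$, for which $\Q(\sqrt{u})=\Q$, changes the count by at most one and is harmless).

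Since Theorem~\ref{thm:trinom} does the heavy lifting, the only genuinely new point is the lower bound $N\gg AB$, and the main obstacle is to guarantee it \emph{uniformly} over the location $(C,D)$ and the shape of the box. Invoking Eisenstein's criterion circumvents the alternative of estimating the number of reducible trinomials directly, which would require root-size and gap estimates sensitive to $C,D$ and to the constraint $A\ll B^{2-\varepsilon}$; the only remaining care is needed in the degenerate ranges where $A$ or $B$ is bounded, and these are absorbed by interpreting $(A+B+C+D)^{o(1)}$ as $A+B+C+D\to\infty$, in which regime the constraint $A\ll B^{2-\varepsilon}$ forces $B\to\infty$.
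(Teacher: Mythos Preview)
Your approach is correct and is the one the paper intends: the paper does not spell out a proof but says the corollary follows from Theorem~\ref{thm:trinom} ``as in~\cite{Shp1}'', and the argument there is precisely your pigeonhole, dividing a lower bound $N \gg AB$ for the number of irreducible trinomials in the box (produced via Eisenstein's criterion) by the uniform upper bound $T_n(A,B,C,D;u) \le A(A+B+C+D)^{o(1)}$ from Theorem~\ref{thm:trinom}.

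One small correction to your last paragraph: the condition $A \ll B^{2-\varepsilon}$ is \emph{not} a hypothesis of Theorem~\ref{thm:trinom} or of the corollary---it appears in the paper only as the range in which the new bound improves on the one from~\cite{Shp1}---so you cannot invoke it to dispose of the degenerate case of bounded $B$. (Even if it were assumed, it would not force $B\to\infty$: one may let $C+D\to\infty$ with $A,B$ fixed.) This does no real damage to your argument, however: once $B$ exceeds an absolute constant the Eisenstein count with $p=2$ (take $a$ even and $b\equiv 2 \pmod 4$) already gives $\gg AB$ irreducible pairs uniformly in $C,D$, and for bounded $B$ the claimed lower bound $B(A+B+C+D)^{o(1)}$ tends to $0$ and so requires only that the box contain at least one irreducible trinomial.
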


In fact, in Corollary~\ref{cor:Sn}, the lower bound holds for the number of distinct 
square-free parts of discriminants $\Delta_n(a,b)$. Thus, taking $C=D=1$ 
and $A=B=H$ in Corollary~\ref{cor:Sn}, by Lemma~\ref{lem:D/Delta} below, we also obtain the following:

\begin{cor}
\label{cor:distinct disc} 
For $n \ge 2$ with $n \equiv 1 \pmod 4$, the number of distinct discriminants of fields generated by a root of polynomials from
$$
  \{X^n+aX+b~:~ 1\le a,b \le H\}
$$
is at least $H^{1+o(1)}$.
\end{cor}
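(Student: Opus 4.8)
The plan is to deduce the bound directly from Corollary~\ref{cor:Sn}, exploiting the fact that the square-free part of the polynomial discriminant is an invariant of the generated field. First I would recall the classical relation $\Delta_n(a,b) = [\cO_K : \Z[\alpha]]^2\,\Delta(f)$ between the discriminant of the trinomial $f = X^n + aX + b$ (with root $\alpha$, and $\cO_K$ the ring of integers of $K = \Q(\alpha)$) and the field discriminant $\Delta(f)$; this is exactly what Lemma~\ref{lem:D/Delta} supplies. Since the index $[\cO_K : \Z[\alpha]]$ is a nonzero integer, the quantities $\Delta_n(a,b)$ and $\Delta(f)$ differ by a perfect square, and in particular share the same sign, hence the same square-free part.

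The key observation is then that the square-free part of $\Delta_n(a,b)$ depends only on the field $K = \Q(\alpha)$, equivalently only on $\Delta(f)$. Consequently, two irreducible trinomials $X^n + aX + b$ whose roots generate fields of the same discriminant must have discriminants $\Delta_n(a,b)$ with the same square-free part. Taking the contrapositive, distinct square-free parts of $\Delta_n(a,b)$ force distinct field discriminants, so the number of distinct discriminants in question is at least the number of distinct square-free parts of $\Delta_n(a,b)$ taken over the admissible pairs $(a,b)$.

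To finish, I would apply Corollary~\ref{cor:Sn} with $A = B = H$ and $C = D = 1$, in the strengthened form recorded in the text preceding the statement, namely that its lower bound already holds for the number of distinct square-free parts of $\Delta_n(a,b)$. This yields at least $H\,(A+B+C+D)^{o(1)} = H^{1+o(1)}$ distinct square-free parts, which together with the previous paragraph gives the claimed lower bound $H^{1+o(1)}$ for the number of distinct field discriminants. The argument is a short logical reduction, and essentially all the analytic work is already packaged into Corollary~\ref{cor:Sn}. The only point requiring genuine care—and the step I expect to be the crux—is confirming through Lemma~\ref{lem:D/Delta} that the square-free part is truly a field invariant, so that the inequality runs in the correct direction (distinct square-free parts $\Rightarrow$ distinct field discriminants, and not the reverse).
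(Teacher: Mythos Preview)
Your proposal is correct and follows essentially the same route as the paper: the paper also invokes the strengthened form of Corollary~\ref{cor:Sn} (lower bound for the number of distinct square-free parts of $\Delta_n(a,b)$) with $A=B=H$, $C=D=1$, and then appeals to Lemma~\ref{lem:D/Delta} to pass from square-free parts to field discriminants. Your write-up makes the logical direction (distinct square-free parts $\Rightarrow$ distinct field discriminants) more explicit than the paper's one-line derivation, but the argument is the same.
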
 

\begin{cor}
\label{cor:QX}
Let $Q_n(\Delta)$ be the number of distinct quadratic fields $\Q(\sqrt{\Delta_n(a,b)})$
taken over all integers $a,b \ge 1$ such that $X^n+aX+b$ is irreducible over
$\Q$ and $|\Delta_n(a,b)| \le \Delta$. Then, for $n \equiv 1 \pmod 4$ we have
$$
  Q_n(\Delta) \ge \Delta^{1/(n-1)+o(1)}.
$$ 
\end{cor}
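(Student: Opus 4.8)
The plan is to deduce Corollary~\ref{cor:QX} from Corollary~\ref{cor:Sn} by a dyadic-decomposition argument on the size of the pairs $(a,b)$, carefully tracking the constraint $|\Delta_n(a,b)| \le \Delta$. Recall that $\Delta_n(a,b) = (n-1)^{n-1}a^n + n^n b^{n-1}$, so for positive $a,b$ both terms are positive and the condition $|\Delta_n(a,b)| \le \Delta$ forces $a^n \ll \Delta$ and $b^{n-1} \ll \Delta$; equivalently $a \ll \Delta^{1/n}$ and $b \ll \Delta^{1/(n-1)}$. Thus every admissible pair lives in a box of dimensions roughly $\Delta^{1/n} \times \Delta^{1/(n-1)}$, and the governing parameter for the count of distinct quadratic fields should be the shorter side, which is the $a$-range of length about $\Delta^{1/n}$, or the longer side of length about $\Delta^{1/(n-1)}$.

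First I would fix a dyadic range for $b$, say $b \in [B, 2B]$ with $B$ a power of $2$ up to $\asymp \Delta^{1/(n-1)}$. For such a range, the constraint $n^n b^{n-1} \le \Delta$ is automatically compatible, and the corresponding admissible $a$ satisfy $(n-1)^{n-1} a^n \le \Delta$, so $a$ ranges over an interval of length $A \asymp \Delta^{1/n}$ starting at $C = 1$. I would then apply Corollary~\ref{cor:Sn} with these values of $A$, $B$, $C=D=1$ (or the dyadic shift $D = B$), which yields at least $B\,(A+B+C+D)^{o(1)}$ distinct quadratic fields from this single dyadic block. Taking $B$ as large as the constraint permits, namely $B \asymp \Delta^{1/(n-1)}$, and noting that all parameters are $\Delta^{O(1)}$ so that the $(A+B+C+D)^{o(1)}$ factor is absorbed into $\Delta^{o(1)}$, produces at least $\Delta^{1/(n-1)+o(1)}$ distinct fields, which is exactly the claimed bound.

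The one genuine subtlety to check is that fields counted in different dyadic blocks, or within the single maximal block, are not being over- or under-counted in a way that breaks the estimate. Since Corollary~\ref{cor:Sn} already lower-bounds the number of \emph{distinct} quadratic fields arising from the chosen rectangle, a single well-chosen block with $B \asymp \Delta^{1/(n-1)}$ suffices and no summation over blocks is even required; this sidesteps any inclusion–exclusion issue entirely. The only point demanding care is verifying that the rectangle $[1,1+A] \times [B, B+B]$ produced by taking $D = B$ indeed lies inside the region $|\Delta_n(a,b)| \le \Delta$, which follows from the explicit bounds $A \asymp \Delta^{1/n}$ and $2B \asymp \Delta^{1/(n-1)}$ together with the fact that on this rectangle $\Delta_n(a,b) \ll \Delta$ with a constant depending only on $n$; absorbing that constant is harmless since the final exponent carries an $o(1)$.

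The step I expect to be the main obstacle is the bookkeeping that confirms the \emph{shorter} admissible side governs the inapplicability and the \emph{longer} side governs the yield: one must ensure that the hypothesis $A \ll B^{2-\varepsilon}$ of Theorem~\ref{thm:trinom} (inherited by Corollary~\ref{cor:Sn}) actually holds for the chosen block. With $A \asymp \Delta^{1/n}$ and $B \asymp \Delta^{1/(n-1)}$ we have $A/B^2 \asymp \Delta^{1/n - 2/(n-1)}$, and since $1/n - 2/(n-1) < 0$ for all $n \ge 2$, the constraint $A \ll B^{2-\varepsilon}$ is comfortably satisfied for a suitable small $\varepsilon>0$ once $\Delta$ is large. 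Hence the hypotheses of Corollary~\ref{cor:Sn} are met, and the bound $Q_n(\Delta) \ge \Delta^{1/(n-1)+o(1)}$ follows.
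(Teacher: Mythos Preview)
Your argument is correct and matches the paper's approach: the paper simply records that Corollary~\ref{cor:QX} follows ``as in~\cite{Shp1}'' from Theorem~\ref{thm:trinom} (via Corollary~\ref{cor:Sn}), and your derivation---choosing a single box with $B \asymp \Delta^{1/(n-1)}$ and $A \asymp \Delta^{1/n}$ so that $|\Delta_n(a,b)| \le \Delta$ on the box, then invoking Corollary~\ref{cor:Sn}---is exactly that deduction made explicit. One small correction: the condition $A \ll B^{2-\varepsilon}$ that you check at the end is \emph{not} a hypothesis of Theorem~\ref{thm:trinom} or Corollary~\ref{cor:Sn}; the paper mentions it only to indicate the range in which the new bound improves on~\cite{Shp1}, so that verification is unnecessary (though harmless).
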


Corollary~\ref{cor:QX} improves the bound
$$
  Q_n(\Delta) \gg \Delta^{\kappa_n/3} (\log \Delta)^{-1}
$$
in~\cite{Shp1}, where
$$
  \kappa_n = \frac{1}{n} + \frac{1}{n-1},
$$
by asymptotically a factor $3/2$ in the exponent.

We conclude the paper with an appendix  where we take the opportunity to correct an error in~\cite[Lemmas~5 and~6]{Diet2}  (and consequently~\cite[Lemma~8]{Diet2}) which are not correct as stated if the degree $n$ is of the form
$n=\newu^2$ or $n=\newu^2+1$ for some odd $m$, see Section~\ref{sec:app}.

\section{Preparations}

\subsection{Polynomials and discriminants}

We recall that for an  
arbitrary field $\K$ and $f=X^n+a_{n-1}X^{n-1} +\ldots+a_1X+a_0\in\K[X]$, the discriminant 
of $f$ is defined by
\begin{equation}
\label{eq:D vs R}
\Disc{f}=(-1)^{n(n-1)/2}\Res{f,f'},
\end{equation}
where $\Res{g,h}$ denotes the resultant of $g,h \in \K[X]$. 

Throughout we treat $\Disc{f}$ as a polynomial in formal 
variables $a_0, \ldots, a_{n-1}$. 

It is well-known, see~\cite[Section~3.3]{Lang}, that $\Disc{f}$ and $\Delta(f)$
are related via an integer square.

\begin{lemma}
\label{lem:D/Delta} Let $f\in\Q[X]$ be a monic irreducible polynomial. Then  $\Disc{f}/\Delta(f)=r^2$ for some integer $r\ge 1$. 
\end{lemma}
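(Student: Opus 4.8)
The plan is to realise both quantities as discriminants of $\Z$-lattices inside the field $K = \Q(\alpha)$, where $\alpha$ is a root of $f$, and to relate them through the index of one lattice in the other. Write $n = \deg f = [K:\Q]$. Since $f$ is monic with integer coefficients (as for all polynomials in $\cP_n(H)$, where $\Delta(f)$ is defined), its root $\alpha$ is an algebraic integer, so $\Z[\alpha] \subseteq \cO_K$, where $\cO_K$ is the ring of integers of $K$; irreducibility of $f$ guarantees that $1, \alpha, \dots, \alpha^{n-1}$ is a $\Q$-basis of $K$, hence $\Z[\alpha]$ is a full-rank sublattice of $\cO_K$ and the index $[\cO_K : \Z[\alpha]]$ is finite. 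The first step is to recall the standard identity $\Disc{f} = \det\(\Tr_{K/\Q}(\alpha^{i+j})\)_{0\le i,j\le n-1}$, which identifies $\Disc{f}$ with the discriminant of the power basis $1,\alpha,\dots,\alpha^{n-1}$ of $\Z[\alpha]$, while by definition $\Delta(f) = \det\(\Tr_{K/\Q}(\omega_i\omega_j)\)$ for any integral basis $\omega_1,\dots,\omega_n$ of $\cO_K$.

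The key step is the behaviour of the trace-form discriminant under a change of basis. Expressing the power basis in terms of the integral basis, $\alpha^{i} = \sum_j M_{ij}\omega_j$, produces a matrix $M$ whose entries lie in $\Z$ precisely because each $\alpha^i \in \cO_K$. A direct computation with the symmetric bilinear trace form gives $\(\Tr_{K/\Q}(\alpha^{i+j})\) = M\(\Tr_{K/\Q}(\omega_k\omega_l)\)M^{t}$, and taking determinants yields $\Disc{f} = (\det M)^2\,\Delta(f)$. Since $M$ is the change-of-basis matrix of a finite-index sublattice, $|\det M| = [\cO_K : \Z[\alpha]]$ is a positive integer, nonzero because both families are $\Q$-bases of $K$. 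Setting $r = |\det M| = [\cO_K:\Z[\alpha]] \ge 1$ then gives $\Disc{f}/\Delta(f) = r^2$, completing the argument.

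The main point to get right — and the only place the hypotheses are genuinely used — is to ensure that $M$ has integer entries and nonzero determinant. Integrality of the entries comes from $\alpha$ being an algebraic integer, so that $\Z[\alpha] \subseteq \cO_K$; this is exactly what upgrades the identity $\Disc{f}/\Delta(f) = (\det M)^2$ from a mere rational square to an integer square. Nonvanishing of $\det M$ comes from irreducibility, which forces $1,\alpha,\dots,\alpha^{n-1}$ to be linearly independent over $\Q$. The transformation law for the discriminant under change of basis is entirely standard, so I would assemble these facts and cite~\cite[Section~3.3]{Lang} rather than reprove it from scratch.
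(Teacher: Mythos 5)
Your proof is correct and is exactly the standard argument behind this lemma: the paper itself offers no proof, deferring to~\cite[Section~3.3]{Lang}, and your identity $\Disc{f}=(\det M)^2\,\Delta(f)$ with $|\det M|=[\cO_K:\Z[\alpha]]$ is precisely the lattice-index computation that reference supplies. You were also right to insist on integer coefficients: for a general monic irreducible $f\in\Q[X]$ the conclusion fails as literally stated (for $f=X^2-1/2$ one has $\Disc{f}/\Delta(f)=2/8=(1/2)^2$, a rational but not an integer square), so the restriction to $f\in\Z[X]$ --- automatic for the polynomials in $\cI_n(H)$ to which the paper applies the lemma --- is exactly what makes $M$ integral and $r$ an integer.
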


We also recall that the question about the number of polynomials 
$f \in \cI_n(H)$ with $\Delta(f) = \Disc{f}$ remains unanswered. 
Ash, Brakenhoff and Zarrabi~\cite{ABZ} give some heuristic
and numerical evidences towards the conjecture, attributed in~\cite{ABZ} to
Hendrik Lenstra, that the density of such polynomials is $6/\pi^2$. 
We remark that this density is higher than the expected density of 
square-free discriminants $\Disc{f}$ (in which case we immediately 
obtain $\Delta(f) = \Disc{f}$ by Lemma~\ref{lem:D/Delta}), see~\cite{ABZ}
for a discussion of this phenomenon. 

We now need several results about the irreducibility of some polynomials  
involving polynomial discriminants. For the rest of the paper the discriminant $\Disc{F}$ of a polynomial $F$ always means the discriminant with respect to the variable $X$, even if 
the polynomial $F$ may depend on other variables. 

\begin{lemma}
\label{l1}
Let $n \ge 3$, let $a_{2}, \ldots, a_{n-1} \in \Z$ and let
$c_0, c_1 \in \Q$. Moreover, let $u \in \Z$ be square-free such that
neither $|u|(n-1)^{n-1}$ nor $|u|n^n$ is a square.
Then the polynomial
\begin{align*}
  Z^2-u \Disc{X^n+a_{n-1} X^{n-1} + \ldots + a_2 X^2
  + (c_0 A_0 + c_1) X + A_0}\quad &\\
   \in \Q[A_0,Z] &
\end{align*}
is irreducible in $\Q[A_0,Z]$.
\end{lemma}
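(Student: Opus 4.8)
The plan is to recognise the polynomial $Z^2-uD(A_0)$, where I write $f=X^n+a_{n-1}X^{n-1}+\cdots+a_2X^2+(c_0A_0+c_1)X+A_0$ and $D(A_0)=\Disc{f}\in\Q[A_0]$, as a monic quadratic in $Z$ and to reduce its irreducibility to a non-squareness statement. Viewing it as a polynomial in $Z$ over the ring $R=\Q[A_0]$ it is monic, hence primitive, so by Gauss's lemma it is irreducible in $R[Z]=\Q[A_0,Z]$ if and only if it is irreducible over the fraction field $K=\Q(A_0)$. A monic quadratic $Z^2-c$ over a field is irreducible exactly when $c$ is a non-square, and since $R$ is a UFD (hence integrally closed in $K$), the element $uD(A_0)\in R$ is a square in $K$ if and only if it is a square in $R$. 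Thus it suffices to prove that $uD(A_0)$ is \emph{not} a square in $\Q[A_0]$.

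To that end I would pin down the degree and leading coefficient of $D(A_0)$ in the variable $A_0$. Regarded as a polynomial in $a_0,\ldots,a_{n-1}$, the discriminant $\Disc{f}$ is isobaric of weight $n(n-1)$ when $a_j$ is assigned weight $n-j$. After the substitution $a_0=A_0$, $a_1=c_0A_0+c_1$ with $a_2,\ldots,a_{n-1}$ constant, a monomial $\prod_j a_j^{e_j}$ contributes $A_0$-degree at most $e_0+e_1$, and the weight relation $\sum_j(n-j)e_j=n(n-1)$ gives $(n-1)(e_0+e_1)\le n(n-1)$, so $e_0+e_1\le n$ with equality only for the monomial $a_1^n$. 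Hence if $c_0\neq 0$ then $\deg_{A_0}D=n$; since specialising $a_2=\cdots=a_{n-1}=0$ shows that the coefficient of the pure monomial $a_1^n$ in $\Disc{f}$ equals its coefficient in the discriminant of the trinomial $X^n+a_1X+a_0$, namely $\pm(n-1)^{n-1}$ (cf.\ \eqref{eq:DiscTrin}), the leading coefficient of $D$ is $\pm(n-1)^{n-1}c_0^n\neq 0$. If instead $c_0=0$, the same weight count maximised at $e_0=n-1$ gives $\deg_{A_0}D=n-1$ with unique top monomial $a_0^{n-1}$, whose coefficient, read off from $\Disc{f}=(-1)^{n(n-1)/2}\Res{f,f'}$ and the factorisation of the resultant through the roots of $f'$, equals $(-1)^{n(n-1)/2}n^n$, so the leading coefficient of $D$ is $\pm n^n\neq 0$.

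Finally I would rule out $uD$ being a square by noting that a square in $\Q[A_0]$ has even degree and leading coefficient a square in $\Q$. When $c_0\neq 0$: if $n$ is odd the degree $n$ is odd and we are done, while if $n$ is even then $c_0^n$ is a square and squareness of $\pm u(n-1)^{n-1}c_0^n$ would force $|u|(n-1)^{n-1}$ to be a perfect square, contrary to hypothesis. When $c_0=0$: if $n$ is even the degree $n-1$ is odd, while if $n$ is odd squareness of $\pm un^n$ would force $|u|n^n$ to be a perfect square, again excluded. In every case $uD(A_0)$ is not a square in $\Q[A_0]$, and the reduction completes the argument. I expect the main obstacle to lie in the middle step: justifying rigorously that, after the substitution, the $A_0$-degree and leading coefficient are exactly as stated, in particular that the dominant monomial is isolated with no cancellation and that its coefficient sits in the same square class as $(n-1)^{n-1}$ or $n^n$, so that the two square-free hypotheses are precisely what is needed.
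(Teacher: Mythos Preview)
Your proposal is correct and follows essentially the same approach as the paper: both reduce irreducibility of $Z^2-uD(A_0)$ to showing that $uD(A_0)$ is not a square in $\Q[A_0]$, and both accomplish this by identifying the leading monomial of $D(A_0)$ (via the weighted-homogeneity of the discriminant) and checking that its coefficient, combined with the degree parity, is incompatible with the hypotheses on $|u|(n-1)^{n-1}$ and $|u|n^n$. Your write-up is simply more explicit---you spell out the Gauss-lemma reduction, the isobaric bookkeeping isolating $a_1^n$ (respectively $a_0^{n-1}$) as the unique top monomial, and the parity split---whereas the paper defers these details to~\cite{Diet2}.
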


\begin{proof} 
We closely follow the proof of~\cite[Lemma~5]{Diet2}, see also Section~\ref{sec:app}. Writing
$$
  D(A_0) = u \Disc{X^n+a_{n-1} X^{n-1} + \ldots + a_2 X^2 + (c_0 A_0 + c_1) X + A_0},
$$
it is enough to show that $D(A_0)$ is no square in $\Q[A_0]$.

For $c_0 \ne 0$, by~\cite[Lemma~4]{Diet2},
we find that the monomial in $D(A_0)$ with biggest degree is
$$
  u (-1)^{(n-1)(n-2)/2} (n-1)^{n-1} c_0^n A_0^n,
$$
which cannot be a square in $\Q[A_0]$. Indeed, if $n$ is odd this is obvious. If $n$ is even this is true
since $|u|(n-1)^{n-1}$ is not a square but $c_0^n$ is.

For $c_0=0$, by~\cite[Lemma~3]{Diet2}, one finds that the monomial in
$D(A_0)$ with biggest degree is
$$
  u (-1)^{n(n-1)/2} n^n A_0^{n-1}.
$$
Again, since $|u|n^n$ is no square, this cannot be a square in $\Q[A_0]$.
\end{proof}

In the same way one proves the following analogue of~\cite[Lemma~6]{Diet2}.

\begin{lemma}
\label{l2}
Let $n \ge 3$, let $a_2, \ldots, a_{n-1} \in \Z$ and $c \in \Q$.
Moreover, let $u \in \Z$ be square-free such that $|u|(n-1)^{n-1}$
is not a square. Then the polynomial
$$
  Z^2 - u \Disc{X^n+a_{n-1} X^{n-1} + \ldots +
  a_2 X^2 + A_1 X + c}\in \Q[A_1,Z]
$$
is irreducible in $\Q[A_1,Z]$.
\end{lemma}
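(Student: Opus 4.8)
The plan is to mirror the proof of Lemma~\ref{l1} almost verbatim, the only change being which coefficient of the underlying polynomial now carries the variable. Writing
$$
  D(A_1) = u \Disc{X^n+a_{n-1} X^{n-1} + \ldots + a_2 X^2 + A_1 X + c},
$$
I would first reduce the claim to showing that $D(A_1)$ is \emph{not} a square in $\Q[A_1]$. Indeed, $Z^2 - D(A_1)$ is monic in $Z$, hence primitive over the UFD $\Q[A_1]$, so by Gauss's lemma it is irreducible in $\Q[A_1,Z]=\Q[A_1][Z]$ as soon as it is irreducible over the fraction field $\Q(A_1)$; and a quadratic $Z^2-D$ is irreducible over $\Q(A_1)$ precisely when $D$ is not a square there. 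Since an element of $\Q[A_1]$ that is a square in $\Q(A_1)$ is already a square in $\Q[A_1]$, it suffices to rule out $D(A_1)$ being a perfect square in the polynomial ring.

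To do this I would identify the monomial of largest degree in $A_1$, exactly as in~\cite{Diet2}. Treating $\Disc{\cdot}$ as an isobaric polynomial in the coefficients $a_0=c, a_1=A_1, a_2,\ldots,a_{n-1}$ in which $a_i$ carries weight $n-i$ and the total weight is $n(n-1)$, the variable $A_1=a_1$ has weight $n-1$, so the highest possible power of $A_1$ is $A_1^n$; moreover its coefficient receives no contribution from $c$ or from $a_2,\ldots,a_{n-1}$, since each of these has strictly smaller weight and there is no room left in the weight budget. Hence the leading term coincides with the one coming from the pure trinomial $X^n+A_1X+c$, namely
$$
  u(-1)^{(n-1)(n-2)/2}(n-1)^{n-1}A_1^n .
$$

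Finally I would conclude as follows. If $n$ is odd, the monomial $A_1^n$ is already not a square in $\Q[A_1]$. If $n$ is even, this monomial is a square only if its coefficient $\pm u(n-1)^{n-1}$ is a square in $\Q$, which forces the integer $|u|(n-1)^{n-1}$ to be a perfect square; this is excluded by hypothesis. In either case the leading monomial of $D(A_1)$ is not a square, so $D(A_1)$ is not a square in $\Q[A_1]$, which completes the argument. The one genuinely delicate point is the exact determination of this leading coefficient, both its magnitude $(n-1)^{n-1}$ and its sign: it is precisely a miscount of this kind for $n=v^2$ or $n=v^2+1$ with $v$ odd that forces the hypothesis ``$|u|(n-1)^{n-1}$ is not a square'' and motivates the correction to~\cite{Diet2} recorded in the appendix. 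Everything else in the argument is formal.
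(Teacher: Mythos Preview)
Your proposal is correct and follows exactly the approach the paper intends (mirroring the proof of Lemma~\ref{l1}): reduce to showing $D(A_1)$ is not a square in $\Q[A_1]$ by exhibiting its leading term $u(-1)^{(n-1)(n-2)/2}(n-1)^{n-1}A_1^{\,n}$ and observing this cannot be a square. One small slip worth fixing: $c=a_0$ has isobaric weight $n$, not ``strictly smaller'' than the weight $n-1$ of $A_1$; the operative point (which you also state) is simply that $A_1^{\,n}$ already exhausts the full weight $n(n-1)$, so its coefficient is a pure constant.
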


The argument below is modelled from that of the proof of~\cite[Lemma~4]{Shp0}.

For a monic polynomial $f(X)\in\K[X]$ and $(u,v) \in \K^*\times \K$, 
we define the polynomial 
$$
f_{u,v}(X) = u^{n} f\(u^{-1} \(X+v\)\) \in\K[X],
$$
which we  write as
\begin{equation}
\label{eq:Auv def}
f_{u,v}(X) = X^n + \sum_{j=1}^{n}A_{f,j} (u,v) X^{n-j}.
\end{equation}
One easily verifies that by the Taylor formula
\begin{equation}
\label{eq:Auv}
A_{f,j} (u,v) = u^j \frac{f^{(n-j)}(u^{-1} v)} {(n-j)!}, \qquad j =1, \ldots, n.
\end{equation}

We need some simple properties of the polynomials $f_{u,v}(X)$.
First we relate $\Disc{f_{u,v}}$ to $\Disc{f}$. The following statement is 
shown in the proof of~\cite[Theorem~1]{Shp2};
 it follows easily via the standard expression 
of the discriminant via the roots of the corresponding polynomial and the relation 
between the roots of $f$ and $f_{u,v}$.

\begin{lemma}
\label{lem:Discr uv} For any field $\K$ and a monic polynomial 
$f(X) \in \K[X]$ of degree $n$, we have
$$
\Disc{f_{u,v}} = u^{n(n-1)}\Disc{f}, \qquad (u,v) \in \K^*\times \K.
$$
\end{lemma}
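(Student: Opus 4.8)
The plan is to reduce everything to the classical description of the discriminant of a monic polynomial as a product of squared differences of its roots, and then simply track how the affine change of variable defining $f_{u,v}$ acts on those roots. The characteristic of $\K$ will play no role, so I would work throughout in a splitting field.

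First I would pass to a splitting field $\mathbb{L}$ of $f$ over $\K$, writing $f(X)=\prod_{i=1}^{n}(X-\alpha_i)$ with $\alpha_1,\dots,\alpha_n\in\mathbb{L}$. Since $f$ is monic, the defining relation~\eqref{eq:D vs R} combined with the standard evaluation $\Res{f,f'}=\prod_{i=1}^n f'(\alpha_i)$ and the identity $\prod_i f'(\alpha_i)=(-1)^{n(n-1)/2}\prod_{i<j}(\alpha_i-\alpha_j)^2$ yields the familiar root formula
$$\Disc{f}=\prod_{1\le i<j\le n}(\alpha_i-\alpha_j)^2,$$
where the two sign factors $(-1)^{n(n-1)/2}$ cancel because $n(n-1)$ is even.

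Next I would identify the roots of $f_{u,v}$. By the definition $f_{u,v}(X)=u^n f\bigl(u^{-1}(X+v)\bigr)$, we have $f_{u,v}(X)=0$ precisely when $u^{-1}(X+v)$ is a root of $f$, that is, when $X=u\alpha_i-v$ for some $i$. As recorded in~\eqref{eq:Auv def}, $f_{u,v}$ is monic of degree $n$, and since $u\ne 0$ the $n$ values $\beta_i=u\alpha_i-v$ form its complete multiset of roots; hence the same root formula applies to $f_{u,v}$. The computation is then immediate, since the additive shift $-v$ cancels in every difference while each factor of $u$ survives:
$$\Disc{f_{u,v}}=\prod_{i<j}(\beta_i-\beta_j)^2=\prod_{i<j}\bigl(u(\alpha_i-\alpha_j)\bigr)^2=u^{\,2\binom{n}{2}}\prod_{i<j}(\alpha_i-\alpha_j)^2=u^{\,n(n-1)}\Disc{f}.$$

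There is no genuine obstacle here; the only points needing care are formal. I would check that $f_{u,v}$ is truly monic of degree $n$, so that no stray leading-coefficient factor enters the root formula — this holds because the scaling $u^n\cdot u^{-n}=1$ cancels on the top-degree term — and verify the exponent bookkeeping $2\binom{n}{2}=n(n-1)$. The hardest part, such as it is, is merely making explicit the two classical facts (the resultant-to-root-product identity and the invariance of the root correspondence under base change to $\mathbb{L}$) that the proof leans on.
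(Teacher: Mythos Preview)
Your proof is correct and follows exactly the approach the paper indicates: the paper does not give a full proof but simply notes that the identity ``follows easily via the standard expression of the discriminant via the roots of the corresponding polynomial and the relation between the roots of $f$ and $f_{u,v}$,'' which is precisely what you carry out.
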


Let $\F_p$ denote the finite field of $p$ elements. 

We now show that the map $f \mapsto f_{u,v}$ is almost a permutation on 
the set of monic polynomials   $f(X) \in \F_p[X]$ of fixed degree.

\begin{lemma}
\label{lem:Distinct} For a prime $p> n$, for all but at most $O\(p^{\fl{n/2}+1}\)$ monic polynomials  
$f(X) \in \F_p[X]$ of degree $n$, 
 the polynomials $f_{u,v}(X)$, $(u,v) \in \F_p^*\times \F_p$, are pairwise distinct. 
\end{lemma}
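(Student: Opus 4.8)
The plan is to recognise the maps $f \mapsto f_{u,v}$ as a group action and to reduce the statement to counting polynomials with a nontrivial stabiliser. First I would record the composition law: a direct substitution in the definition of $f_{u,v}$ gives $(f_{u,v})_{u',v'} = f_{u'u,\,u'v+v'}$, so the pairs $(u,v) \in \F_p^* \times \F_p$ act on the set of monic polynomials of degree $n$ through the affine group $\mathrm{Aff}(\F_p)$, with $(1,0)$ acting as the identity. Each $f_{u,v}$ is a bijection of this set, so by orbit--stabiliser the polynomials $f_{u,v}$ are pairwise distinct precisely when the stabiliser of $f$ is trivial. Thus it suffices to bound the number of $f$ admitting some $(u,v)\ne(1,0)$ with $f_{u,v}=f$.

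Next I would translate the equation $f_{u,v}=f$ into a symmetry of the roots. Writing $f(X)=\prod_i(X-\alpha_i)$ over $\overline{\F_p}$, one computes $f_{u,v}(X)=\prod_i\bigl(X-(u\alpha_i-v)\bigr)$, so $f_{u,v}=f$ holds iff the multiset of roots is invariant under $\alpha\mapsto u\alpha-v$. Two cases arise. If $u=1$ and $v\ne 0$ the map is a nonzero translation; since $p>n$, every orbit of such a translation on $\overline{\F_p}$ has size $p>n$, so no polynomial of degree $n$ can have an invariant root multiset, and this case contributes nothing. If $u\ne 1$ the map has a unique fixed point $\alpha_0=v/(u-1)$, and comparing the sums of roots of $f$ and of $f_{u,v}$ forces $\alpha_0=-a_{n-1}/n$, which is well defined because $p>n$. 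The crucial point is that this centre is determined by $f$ alone.

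This rigidity of the centre is exactly what avoids the loss of a factor of $p$ that a naive union bound over all $(u,v)$ would incur. After the shift $g(X)=f(X+\alpha_0)$ the symmetry becomes the pure scaling $g_{u,0}=g$, and a one-line coefficient comparison using~\eqref{eq:Auv} shows that $g_{u,0}=g$ forces every coefficient of $g$ in a degree not divisible by $\mathrm{ord}(u)$ to vanish; hence there are exactly $p^{\fl{n/\mathrm{ord}(u)}}$ such $g$ for a fixed $u$. Since $\alpha_0$ is recovered from $f$ and the shift is a bijection, the polynomials with nontrivial stabiliser are in bijection with pairs $(\alpha_0,g)$, where $\alpha_0\in\F_p$ and $g$ is a monic degree-$n$ polynomial fixed by some scaling with $u\ne 1$. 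Their number is therefore at most
\begin{equation*}
p \sum_{u \in \F_p^*,\ u \ne 1} p^{\fl{n/\mathrm{ord}(u)}}.
\end{equation*}

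Finally I would estimate this sum. Every $u\ne 1$ has $\mathrm{ord}(u)\ge 2$, so the exponent is at most $\fl{n/2}$; only a bounded number of values of $u$ (those of small order, the bound depending only on $n$) attain $\fl{n/\mathrm{ord}(u)}=\fl{n/2}$, while all remaining $u$ satisfy $\fl{n/\mathrm{ord}(u)}\le\fl{n/2}-1$. As there are fewer than $p$ values of $u$ in total, the sum is $O\bigl(p^{\fl{n/2}}\bigr)$, and the whole count is $O\bigl(p^{\fl{n/2}+1}\bigr)$, as required. The only genuinely delicate point is this last bookkeeping, together with the observation that the translation parameter is rigid; the remainder is a routine application of orbit--stabiliser and an elementary count of scaling-invariant polynomials.
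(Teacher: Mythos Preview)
Your proof is correct and follows the same overall strategy as the paper: recognise that non-distinctness means $f$ is fixed by a nontrivial affine transformation, rule out pure translations using $p>n$, and for $u\ne 1$ recentre at the unique fixed point so that the symmetry becomes a pure scaling.

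The execution, however, differs in a useful way. The paper carries out the final count on the \emph{set of distinct roots}: after shifting, the root set $\cB$ satisfies $\cB=a\cB$, hence is a union of at most $\fl{m/2}$ cosets of $\langle a\rangle$, giving $O(p^{\fl{m/2}})$ choices for $\cB$, then $p$ choices for the shift and $O(1)$ for multiplicities. Your count is on the \emph{coefficients} of the depressed polynomial $g$: the relation $g_{u,0}=g$ forces $b_{n-j}=0$ unless $\mathrm{ord}(u)\mid j$, leaving exactly $\fl{n/\mathrm{ord}(u)}$ free coefficients. This coefficient approach is somewhat cleaner, since it stays inside $\F_p$ throughout and handles repeated roots automatically, whereas the paper's root-set count implicitly has to contend with roots lying in $\overline{\F_p}$ and with multiplicities as a separate step. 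Your observation that the centre $\alpha_0=-a_{n-1}/n$ is determined by $f$ alone (and not by the particular stabilising $(u,v)$) is exactly the paper's shift $\cB=\cA+b(a-1)^{-1}$ expressed in coefficient language; both avoid the naive extra factor of $p$ in the same way. The final bookkeeping (splitting the sum over $u$ according to whether $\mathrm{ord}(u)\le n$ or $\mathrm{ord}(u)>n$) is equivalent to the paper's remark that $a$ has at most $m(m+1)/2$ possible orders.
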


\begin{proof}  
Let $\cA$ be the set of  $m \le n $ distinct roots of $f$. Then the non-uniqueness condition 
$$
f_{s,t}(X)= f_{u,v}(X)
$$ 
with $(s,t) \ne (u,v)$ means that for any $\alpha \in \cA$ there is $\beta \in \cA$ 
with $s\alpha  - t = u\beta - v$. Hence there is a nontrivial linear transformation 
$\cA \mapsto a\cA + b$, sending each element $\alpha \in \cA$ to $a\alpha+b$, which fixes the set $\cA$, that is,
$$
\cA = a\cA + b.
$$

If $a=1$ then $b \ne 0$ and examining the orbit 
$$
\alpha  \mapsto \alpha  + b  \mapsto \alpha +2b\mapsto \ldots
$$
 of any element $\alpha \in \cA$ we see that for some 
$k \le n$ we have to have $\alpha = \alpha + kb$ which is impossible since $p > n$.

Assume now that $a\ne 1$.
Hence for $\cB = \cA +b(a-1)^{-1}$ we have  
\begin{equation}
\label{eq:Set B}
\cB = a\cB.
\end{equation}
Examining the orbit 
$$
\beta \mapsto a\beta  \mapsto a^2 \beta  \mapsto \ldots
$$
of any non-zero element $\beta \in \cB$ we see that 
$a$ is of multiplicative order at most $m$ and thus takes at most $m(m+1)/2$ 
possible values. 

Finally, when $a\ne 1$ is fixed, there are at most $O\(p^{\fl{n/2}}\)$ possibilities 
for the set $\cB$. Indeed, we see from~\eqref{eq:Set B} that  $\cB$  is a union of cosets 
of the multiplicative group $\langle a\rangle \subseteq \F_p^*$ generated by $a$, and possibly of $\{0\}$. Since $a\ne 1$ 
we see that $\#\langle a\rangle \ge 2$ so  each such coset if of size at least $2$, and thus there are 
at most  $\fl{m/2}$ such cosets in $\cB$.
We now observe that, for a fixed 
$a$, any such coset  is defined by any of its elements.  Hence the number of possibilities for the set 
$\cB$ does not exceed the number of choices of $\fl{m/2}$ distinct elements of $\F_p$.

When the set $\cB$ is fixed, there are $p$ possibilities for $b \in \F_p$. Therefore we conclude that there are $O\(p^{\fl{m/2}+1}\)$
possibilities for the set of roots $\cA$. Since there are $O(1)$ choices for the multiplicities of these roots,
and $m \le n$, the result follows. 
\end{proof}

\subsection{Character sums with discriminants}

We remark that the values of the quadratic character $\chi$ of discriminants are polynomial 
analogues of the M{\"o}bius functions for integers, since by 
the Stickelberger theorem~\cite{Dalen,St}, 
for a square-free polynomial $f\in\F_p[X]$ of degree $n$, where
$p$ is odd, 
\begin{equation}
\label{eq:Stickel}
 \(\frac{\Disc{f}}{p}\)=(-1)^{n-r},
\end{equation}
where  $(u/p)$ is the Legendre symbol of $u$ modulo $p$ and  
 $r$ is the number of distinct irreducible factors of $f$
and, of course, 
$$ \(\frac{\Disc{f}}{p}\)= 0
$$ 
if $f$ is not square-free. 
In particular, this interpretation has motivated the work of  
Carmon and Rudnick~\cite{CaRu}. Here we also 
need some simple estimates.

Let $\cM_{n,p}$ be the set of monic polynomials of degree $n$ over $\F_p$. 

\begin{lemma}
\label{lem:sumDisc} For a prime $p\ge 3$,
$$
\sum_{f \in \cM_{n,p}}  \(\frac{\Disc{f}}{p}\)=0.
$$
\end{lemma}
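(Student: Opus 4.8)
The plan is to reduce the character sum to a classical M\"obius-function identity over $\F_p[X]$ by means of the Stickelberger formula \eqref{eq:Stickel}. Since $\(\frac{\Disc{f}}{p}\)=0$ whenever $f$ fails to be square-free, only the square-free $f\in\cM_{n,p}$ contribute to the sum; and for each such $f$, \eqref{eq:Stickel} evaluates the symbol as $\(\frac{\Disc{f}}{p}\)=(-1)^{n-r(f)}$, where $r(f)$ denotes the number of distinct monic irreducible factors of $f$. Pulling out the constant sign, the sum therefore equals $(-1)^{n}\sum_{f}(-1)^{r(f)}$, taken over the square-free monic $f$ of degree $n$.

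Next I would recognise, for square-free $f$, the quantity $(-1)^{r(f)}$ as the value $\mu(f)$ of the M\"obius function of $\F_p[X]$, which by definition vanishes on every non-square-free polynomial. Hence the restriction to square-free $f$ may be dropped at no cost, and the whole character sum collapses to $(-1)^{n}\sum_{f\in\cM_{n,p}}\mu(f)$.

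Finally, I would evaluate $\sum_{f\in\cM_{n,p}}\mu(f)$ through the generating-function identity $\sum_{f\ \mathrm{monic}}\mu(f)\,t^{\deg f}=\prod_{P}\(1-t^{\deg P}\)=1-pt$, the product running over monic irreducibles $P$ and obtained as the reciprocal of the zeta function $\sum_{f\ \mathrm{monic}}t^{\deg f}=(1-pt)^{-1}$ of $\F_p[X]$. Comparing coefficients of $t^{n}$ shows that $\sum_{\deg f=n}\mu(f)=0$ for every $n\ge 2$, and since $n\ge 3$ throughout, the claimed vanishing follows.

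There is essentially no genuine obstacle here beyond correctly invoking Stickelberger together with the convention $(0/p)=0$; the single substantive input is the M\"obius identity $\sum_{\deg f=n}\mu(f)=0$ for $n\ge 2$, which is classical and immediate from the explicit zeta function of the polynomial ring. The only point worth watching is the degree restriction: the identity fails for $n=1$, where the sum equals $p$ in accordance with $\Disc{X-a}=1$, so the standing hypothesis $n\ge 2$ is genuinely used.
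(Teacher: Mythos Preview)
Your proof is correct and follows essentially the same route as the paper: both reduce via Stickelberger to the sum $\sum_{f\text{ sq-free},\,\deg f=n}(-1)^{\omega(f)}$ and then read off this sum as the degree-$n$ coefficient of $\zeta(T)^{-1}=1-pT$. Your phrasing in terms of the M\"obius function $\mu(f)$ is merely a cosmetic relabelling of the paper's $(-1)^{\omega(f)}$ over square-free $f$, and your remark that the argument genuinely needs $n\ge 2$ (with the $n=1$ case giving $-p$) is a useful clarification the paper leaves implicit.
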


\begin{proof}
Let $\cJ(p)$ be the set of all monic irreducible polynomials over $\F_p$.  
We consider the zeta function $\zeta(T)$ of the affine line over $\F_p$, which is given by
the product
$$
\zeta(T)= \prod_{g \in\cJ(p)} \(\frac{1}{1-T^{\deg  g}}\)
= \frac{1}{1-pT},
$$
that is absolutely converging for $|T| < 1$,
see~\cite[Equations~(1) and~(2)]{Ros}.
Taking the inverse, we derive
\begin{equation}
\label{eq:zetainv}
\zeta(T)^{-1}=(1-pT)=\prod_{g \in\cJ(p)}  \(1-T^{\deg  g}\)
 =\sum_{ f\in\cS(p)} (-1)^{\omega(f)}T^{\deg f},
\end{equation}
where $\cS(p)$ is the  set of all monic square-free polynomials  over $\F_p$
and $\omega(f)$ denotes the number of distinct irreducible factors of $f$.

Using~\eqref{eq:Stickel} and comparing the coefficient of $T^n$ in the equation~\eqref{eq:zetainv}, we obtain  the claimed result.
\end{proof}

Now,  for a vector 
$$
\blambda = \(\lambda_1 ,  \ldots, \lambda_n\) \in \F_p^n 
$$
and a polynomial  
$$f(X)  = X^n+a_{n-1}X^{n-1} + \ldots+a_1X+a_0\in  \cM_{n,p}
$$
we define
\begin{equation}
\label{eq:InnProd}
\langle \blambda \circ f\rangle = \lambda_1 a_{n-1}  + \ldots+ \lambda_{n} a_{0}.
\end{equation}

For an integer $m\ge 1$, we denote
$$
\e_m(z) = \exp(2 \pi i z/m),
$$
and  consider certain mixed exponential and character sums with polynomials.
Bounds of these sums underly our approach via the square-sieve method. 

We emphasise that our bounds in Lemmas~\ref{lem:CharSum p large n} and~\ref{lem:CharSum p}   below save $\max\{p^{(n-1)/4}, p\}$ against the trivial bound, while an immediate
application of the 
classical  Weil bound (see, for example,~\cite[Theorem~11.23]{IwKow}) saves only $p^{1/2}$.  The existence 
of such a bound is quite remarkable since 
the discriminant  $\Disc{f}$, as a polynomial in the coefficients of $f$, 
is highly singular: its locus of singularity is of 
co-dimension one, see~\cite[Section~4]{Shp2}. In particular, this means that the result of
Katz~\cite{Katz} does not apply, while the result of Rojas-Le{\'o}n~\cite{Ro-Le} does not give any advantage 
over the direct application of the Weil bound~\cite[Theorem~11.23]{IwKow}, which saves $p^{1/2}$
over the trivial bound. Instead we recall the following bound obtained independently by
Bienvenu and  L{\^e}~\cite[Theorem~1]{BiLe} and Porritt~\cite[Theorem~1]{Por}. 

\begin{lemma}
\label{lem:CharSum p large n} 
Let $p \ge 3$ be a  prime. 
Then, for $n \ge 3$ and  any $\blambda   \in \F_p^n $, in the notation~\eqref{eq:InnProd},  we have
$$
\sum_{f \in \cM_{n,p} }  \(\frac{\Disc{f}}{p}\)
\ep\(\langle \blambda \circ f\rangle \)
\ll p^{(3n+1)/4}. 
$$
\end{lemma}

We now use a different argument, which stems from~\cite{Shp0}, to get a larger saving for small values of $n$.

\begin{lemma}
\label{lem:CharSum p} 
Let $p \ge 3$ be a  prime. 
Then, for $n \ge 3$ and  any $\blambda   \in \F_p^n $, in the notation~\eqref{eq:InnProd},  we have
$$
\sum_{f \in \cM_{n,p} }  \(\frac{\Disc{f}}{p}\)
\ep\(\langle \blambda \circ f\rangle \)
\ll p^{n-1}. 
$$
\end{lemma}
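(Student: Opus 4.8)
The plan is to exploit the symmetry of the discriminant under the substitutions $f\mapsto f_{u,v}$ in order to convert this character sum into a family of \emph{purely additive} two‑variable exponential sums, to which the Deligne bound applies. First I dispose of two easy cases. If $p\le n$, then the trivial bound $p^n\le np^{n-1}$ already gives the claim, since the implied constant may depend on $n$; so I may assume $p>n$, in which case all factorials $(n-j)!$ and binomial coefficients $\binom{n}{j}$ with $1\le j\le n$ are invertible modulo $p$. If $\blambda=\mathbf{0}$ the sum is $\sum_{f\in\cM_{n,p}}\(\Disc{f}/p\)$, which vanishes by Lemma~\ref{lem:sumDisc}; so I may assume $\blambda\ne\mathbf{0}$ and let $j_0$ be the largest index with $\lambda_{j_0}\ne0$.

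Next comes the averaging step. For each fixed $(u,v)\in\Fp^*\times\Fp$ the map $f\mapsto f_{u,v}$ is a bijection of $\cM_{n,p}$, and by Lemma~\ref{lem:Discr uv} one has $\Disc{f_{u,v}}=u^{n(n-1)}\Disc{f}$ with $u^{n(n-1)}$ a nonzero square, so $\(\Disc{f_{u,v}}/p\)=\(\Disc{f}/p\)$. Substituting $f\mapsto f_{u,v}$ and using this invariance, $S=\sum_f\(\Disc{f}/p\)\,\ep(\langle\blambda\circ f_{u,v}\rangle)$ for every such $(u,v)$; averaging over all $p(p-1)$ pairs gives
\[
p(p-1)\,S=\sum_{f\in\cM_{n,p}}\(\frac{\Disc{f}}{p}\)\sum_{(u,v)\in\Fp^*\times\Fp}\ep\(\Phi_f(u,v)\),
\]
where, by \eqref{eq:Auv def}, $\Phi_f(u,v)=\langle\blambda\circ f_{u,v}\rangle=\sum_{j=1}^{j_0}\lambda_j A_{f,j}(u,v)$. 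It now suffices to show that the inner sum is $O(p)$ for all but $O(p^{n-1})$ of the $f$.

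The crucial structural observation, from \eqref{eq:Auv}, is that $A_{f,j}(u,v)=u^j f^{(n-j)}(u^{-1}v)/(n-j)!$ is homogeneous of degree $j$ in $(u,v)$; hence $\Phi_f$ has degree exactly $j_0$ and its leading form is the binary form $\lambda_{j_0}A_{f,j_0}(u,v)$, namely the homogenization of $\lambda_{j_0}f^{(n-j_0)}/(n-j_0)!$, of degree $j_0$ with nonvanishing leading coefficient $\binom{n}{j_0}$. This leading form is nonsingular — equivalently squarefree as a binary form — precisely when $f^{(n-j_0)}$ is separable. The main difficulty, and the reason the bound beats Weil, is that one needs \emph{genuine} two‑dimensional cancellation here: a one‑variable Weil bound in $v$ followed by a trivial sum over $u$ would only give $O(p^{3/2})$, saving a mere $p^{1/2}$. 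Instead, for such ``good'' $f$, the Deligne bound for two‑variable exponential sums with nonsingular leading form yields $\big|\sum_{(u,v)\in\Fp^2}\ep(\Phi_f(u,v))\big|\le (j_0-1)^2p$; subtracting the line $u=0$, which contributes $\sum_{v}\ep\(\sum_j\lambda_j\binom{n}{j}v^j\)=O(p^{1/2})$ by the one‑variable Weil bound, leaves an inner sum that is $O(p)$.

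Finally I control the exceptional set. The ``bad'' polynomials are those with $\Disc{f^{(n-j_0)}}=0$; since $\Disc{f^{(n-j_0)}}$ is a polynomial of degree $O(1)$ in the coefficients of $f$ that is not identically zero (for $p>n$ some $f^{(n-j_0)}$ is separable), there are at most $O(p^{n-1})$ of them. Bounding their inner sums trivially by $p(p-1)$, the good $f$ contribute $\ll p^n\cdot p=p^{n+1}$ and the bad $f$ contribute $\ll p^{n-1}\cdot p^2=p^{n+1}$, whence $p(p-1)|S|\ll p^{n+1}$ and therefore $|S|\ll p^{n-1}$, as required.
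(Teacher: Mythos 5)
Your proof is correct and follows essentially the same route as the paper's: exploit the invariance $\(\frac{\Disc{f_{u,v}}}{p}\)=\(\frac{\Disc{f}}{p}\)$ to average over all $(u,v)\in\F_p^*\times\F_p$, reduce to the complete two-variable sums $\sum_{(u,v)}\ep\(\sum_j\lambda_j A_{f,j}(u,v)\)$, apply Deligne's bound using that the leading form $\lambda_{j_0}A_{f,j_0}$ is nonsingular, and discard an exceptional set of $O(p^{n-1})$ polynomials (your condition, inseparability of $f^{(n-j_0)}$, is the same as the paper's vanishing of $\Res{f^{(n-j_0)},f^{(n-j_0+1)}}$). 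Your execution is in fact slightly cleaner in three respects: performing the substitution exactly on the full sum over $\cM_{n,p}$ lets you dispense entirely with Lemma~\ref{lem:Distinct} and the set $\cE_{n,p}$, you treat the boundary line $u=0$ by the one-variable Weil bound rather than trivially, and you make explicit the trivial case $p\le n$ that the paper's argument implicitly requires.
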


\begin{proof} By Lemma~\ref{lem:sumDisc} we can assume that $\blambda$ is not identical to zero. 

Let $\cE_{n,p}$ be the
exceptional set of polynomials which are described in Lemma~\ref{lem:Distinct}, 
that is, the set of monic polynomials  
$f(X) \in \F_p[X]$ of degree $n$, such that
 the polynomials $f_{u,v}(X)$, $(u,v) \in \F_p^*\times \F_p$, are not pairwise distinct.

 Thus,  by Lemma~\ref{lem:Distinct}, for $n \ge 3$, we have 
 \begin{equation}
\label{eq:set E}
\# \cE_{n,p} = O\(p^{\fl{n/2}+1}\) =  O\(p^{n-1}\).
\end{equation}

For  $f \in \cM_{n,p}$ we define the quantity $R(f)$ as the following product 
of the resultants of the consecutive derivatives of $f$: 
$$
R(f) = \prod_{j = 0}^{n-1}\Res{f^{(j)},f^{(j+1)}}. 
 $$
 Let $\cF_{n,p}$ be the set of $f \in \cM_{n,p}$ with $R(f) = 0$. 
 Clearly 
 \begin{equation}
\label{eq:set F} 
\# \cF_{n,p} = O\(p^{n-1}\).
\end{equation}
Define 
$$
\cL_{n,p} = \cM_{n,p} \setminus \(\cE_{n,p}\cup \cF_{n,p}\). 
$$

We now see from~\eqref{eq:set E} and~\eqref{eq:set F} that 
 for any  $(u,v) \in \F_p^*\times \F_p$ we have 
\begin{equation}
\begin{split}
\label{eq:M2L}
\sum_{f \in \cM_{n,p} }  & \(\frac{\Disc{f}}{p}\)  
\ep\(\langle \blambda \circ f\rangle \) \\
& = \sum_{f \in \cL_{n,p} }   \(\frac{\Disc{f}}{p}\)
\ep\(\langle \blambda \circ f\rangle \) + O\(p^{n-1}\)\\
& = \sum_{f \in \cL_{n,p} }   \(\frac{\Disc{f_{u,v}}}{p}\)
\ep\(\langle \blambda \circ f_{u,v}\rangle \) + O\(p^{n-1}\). 
\end{split}
\end{equation}
Since $n(n-1)$ is even, by Lemma~\ref{lem:Discr uv} 
 we have 
 $$
  \(\frac{\Disc{f_{u,v}}}{p}\) =  \(\frac{\Disc{f}}{p}\). 
$$
Thus, summing~\eqref{eq:M2L} over all pairs  $(u,v) \in \F_p^*\times \F_p$
and changing the order of summation, we obtain 
\begin{align*}
\sum_{f \in \cM_{n,p} }  & \(\frac{\Disc{f}}{p}\)  
\ep\(\langle \blambda \circ f\rangle \) \\
& =\frac{1}{p(p-1)}\sum_{f \in \cL_{n,p} }   \(\frac{\Disc{f}}{p}\)
\sum_{(u,v) \in \F_p^*\times \F_p} \ep\(\langle \blambda \circ f_{u,v}\rangle \) \\
& \qquad \qquad  \qquad \qquad  \qquad \qquad  \qquad \qquad  \qquad  + O\(p^{n-1}\). 
\end{align*}
Extending the summation to all pairs $(u,v) \in \F_p\times \F_p$ introduces an error $ O\(p^{n-1}\)$
which is admissible, so we obtain 
\begin{equation}
\begin{split}
\label{eq:M2L uv}
\sum_{f \in \cM_{n,p} }  & \(\frac{\Disc{f}}{p}\)  
\ep\(\langle \blambda \circ f\rangle \) \\
& =\frac{1}{p(p-1)}\sum_{f \in \cL_{n,p} }   \(\frac{\Disc{f}}{p}\)
\sum_{(u,v) \in \F_p^2} \ep\(\langle \blambda \circ f_{u,v}\rangle \) \\
& \qquad \qquad  \qquad \qquad  \qquad \qquad  \qquad \qquad  \qquad  + O\(p^{n-1}\). 
\end{split}
\end{equation}

Recalling the notation~\eqref{eq:Auv def}
we write 
$$
\sum_{(u,v) \in \F_p^2} \ep\(\langle \blambda \circ f_{u,v}\rangle \)  
= \sum_{(u,v) \in \F_p^2} \ep\(\sum_{j=1}^{n} \lambda_j A_{f,j} (u,v)  \) .
$$
We now see from~\eqref{eq:M2L uv} that it is enough to show that for any $f \in \cL_{n,p}$ 
the Deligne bound (see~\cite[Section~11.11]{IwKow}) applies to the last sum and thus implies the 
bound 
\begin{equation}
\label{eq:Deligne}
 \sum_{(u,v) \in \F_p^2} \ep\(\sum_{j=1}^{n} \lambda_j A_{f,j} (u,v)  \)  = O(p).
\end{equation}
For this we have to show that the highest form of the polynomial 
$$
F_f(U,V) = \sum_{j=1}^{n} \lambda_j A_{f,j} (U,V) \in \F_p[U,V]
$$
is nonsingular.   Since $\blambda$ is not identical to zero
there is $m$ such that  $\lambda_m \ne 0$ and $\lambda_j = 0 $ for $j > m$
(this condition is void if $m=n$). 

We see from~\eqref{eq:Auv} that $A_{f,j} (U,V) $ is a
homogeneous polynomial of degree 
$\deg A_{f,j} (U,V)  = j$, $ j=1, \ldots, n$.
Hence the highest form of $F_f(U,V)$ is $\lambda_m  A_{f,m} (U,V)$. 
Therefore, to establish the bound~\eqref{eq:Deligne},  it is sufficient to show that the
polynomial $\lambda_m A_{f,m} (U,V) $ is nonsingular, that is, that the 
 equations
\begin{equation}
\label{eq:Eq dU}
\frac{ \partial A_{f,m} (U,V)}{\partial U} = 0
\end{equation}
and 
\begin{equation}
\label{eq:Eq dV}
\frac{ \partial A_{f,m} (U,V)}{\partial V} = 0
\end{equation}
have no common zero $(u_0,v_0) \ne (0,0)$  in the algebraic closure of $\F_p$. 

Now, if $u_0 = 0$, then from~\eqref{eq:Eq dV} we conclude that $v_0=0$, which is 
impossible. Indeed, from~\eqref{eq:Auv} we see that 
$$
A_{f,m} (U,V)  \equiv \binom{n}{m}  V^m  \pmod U 
$$
in the ring $\F_p[U,V]$. Hence 
$$
\frac{ \partial A_{f,m} (U,V)}{\partial V}  \equiv \binom{n}{m} m V^{m-1}  \pmod U, 
$$
which implies the above claim. 

If  $u_0 \ne 0$, then by  the Euler formula for partial derivatives we have
$$
U \frac{ \partial A_{f,m} (U,V)}{\partial U} +  V \frac{ \partial A_{f,m} (U,V)}{\partial V} 
= m U^m \frac{f^{(n-m)}(U^{-1} V)} {(n-m)!} .
$$
Therefore, we conclude from the equations~\eqref{eq:Eq dU} and~\eqref{eq:Eq dV}  
that $v_0/u_0$ is a zero of $f^{(n-m)}(X)$, 
and also by~\eqref{eq:Eq dV},  $v_0/u_0$ is also a zero of $f^{(n-m+1)}(X)$. Hence 
$\Res{f^{(n-m)}, f^{(n-m+1)}} = 0$, which contradicts the condition that $f \not \in \cF_{n,p}$. 
Thus, we have the bound~\eqref{eq:Deligne} and the desired result follows. 
 \end{proof}

We now recall the definition of the Jacobi symbol $(u/m)$
modulo an odd square-free integer $m$:
$$
\(\frac{u}{m}\) = 
\prod_{\substack{p \mid m\\ p~\mathrm{prime}}} \(\frac{u}{p}\) ,
$$
where, as before,  $\(\frac{u}{p}\)$ is the Legendre symbol (that is, the
quadratic character) modulo a prime $p$, see~\cite[Section~3.5]{IwKow}).

We not extend the definition of  $\cM_{n,p}$ to residue rings, and use  $\cM_{n,m}$  to denote
 the set of monic polynomials of degree $n$ over $\Z_m$. 

Now, using the Chinese Remainder Theorem for 
character sums, see~\cite[Equation~(12.21)]{IwKow}, 
we see that  Lemma~\ref{lem:sumDisc}, implies the following identity.

\begin{lemma}
\label{lem:CharSum m compl} 
Let $p$ and $q$  be two sufficiently large distinct primes and let 
$m=pq$.
Then, for $n \ge 3$  we have
$$
\sum_{f \in \cM_{n,m} }\(\frac{\Disc{f}}{m}\) = 0. 
$$
\end{lemma}

Similarly, we see that  Lemmas~\ref{lem:CharSum p large n} and~\ref{lem:CharSum p}, 
together  with the Chinese Remainder Theorem for 
mixed sums of additive and multiplicative 
characters, see~\cite[Equation~(12.21)]{IwKow}, 
yield   the following bound.

\begin{lemma}
\label{lem:CharSum m} 
Let $p$ and $q$  be two sufficiently large distinct primes and let 
$m=pq$.
Then, for $n \ge 3$ and  any  $\blambda   \in \Z_m^n$, in the notation~\eqref{eq:InnProd},  we have
$$
\sum_{f \in \cM_{n,m} }\(\frac{\Disc{f}}{m}\)
\e_m\(\langle \blambda \circ f\rangle \)
\ll \min\left\{m^{(3n+1)/4}, m^{n-1}\right\}. 
$$
\end{lemma}

We now derive our main tool.

\begin{lemma}
\label{lem:CharSum m H} 
Let $p$ and $q$  be two sufficiently large distinct primes and let 
$m=pq$.
Then  we have
\begin{align*}
\sum_{f \in \cP_n(H)}& \(\frac{\Disc{f}}{m}\)\\
&\quad \ll \(\(H/m\)^{n-1}\log m  +\(\log m\)^{n}\) 
\begin{cases}m^{(3n+1)/4} & \text{if}\ n \ge 5.\\
m^{n-1}& \text{if}\ n =3,4.
\end{cases} 
\end{align*}
\end{lemma}

\begin{proof}
Clearly the above sum can be split into $O(H^n/m^n)$ complete sums,
which all vanish by Lemma~\ref{lem:CharSum m compl}, 
and also, for $k=0, \ldots, n-1$ into $O(H^{k}/m^{k}+1)$ hybrid  sums, that are 
complete with respect to exactly $k$ variables  and incomplete with respect 
to the remaining $n-k$ variables. 
Using the standard reduction between complete and
incomplete sums (see~\cite[Section~12.2]{IwKow}) and applying  
Lemma~\ref{lem:CharSum m}  (for  incomplete sums), we derive
\begin{align*}
\sum_{f \in \cP_n(H)}& \(\frac{\Disc{f}}{m}\)\\
&\quad\ll \sum_{k=0}^{n-1} (H^{k}/m^{k}+1) \min\left\{m^{(3n+1)/4}, m^{n-1}\right\} (\log m)^{n-k}, 
\end{align*}
which implies the result. 
\end{proof}

\section{Proof of Theorem~\ref{thm:NHDelta}}

\subsection{The bound~\eqref{eq:nn-1 discr}: the determinant method}
\label{sec:DetMethod}
We need  some  results about equations  involving discriminants,
which could be of  independent interest. 

\begin{lemma}
\label{l3}
Let $n \ge 3$, let
$a_2, \ldots, a_{n-1} \in \Z$ and let $d_0, d_1, d_2 \in \Q$ such
that $(d_0, d_1) \ne (0, 0)$.
Moreover, let $u \in \Z$ be square-free such that
neither $|u|(n-1)^{n-1}$ nor $|u|n^n$ is a square.
Then, for any $c \ge 1$, the system of equations
\begin{align*}
   z^2 & = u\Disc{X^n + a_{n-1} X^n + \ldots + a_1 X + a_0}\\
   0 & = d_0 a_0 + d_1 a_1 + d_2
\end{align*}
has at most $ H^{1/2+o(1)}$ solutions
$z, a_0, a_1 \in \Z$ such that 
$$
|a_0|, |a_1| \le H \mand |z| \le H^c.
$$
\end{lemma}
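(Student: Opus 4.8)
The plan is to use the linear equation to eliminate one of the two free coefficients, reducing the system to the problem of counting integral points on a single irreducible affine plane curve of the shape $z^2 = D(T)$, and then to invoke the determinant method in the form of \cite[Corollary~1]{Diet2}; this follows the treatment of the case $u=1$ in \cite{Diet2}.

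First I would split according to which of $d_0, d_1$ is nonzero, using the hypothesis $(d_0, d_1) \ne (0,0)$. If $d_1 \ne 0$, I solve the linear equation as $a_1 = c_0 a_0 + c_1$ with $c_0 = -d_0/d_1$ and $c_1 = -d_2/d_1$ in $\Q$, and substitute, so that the first equation becomes $z^2 = D(a_0)$, where $D(A_0) = u\Disc{X^n + a_{n-1}X^{n-1} + \ldots + a_2 X^2 + (c_0 A_0 + c_1)X + A_0}$. By Lemma~\ref{l1} the polynomial $Z^2 - D(A_0)$ is irreducible in $\Q[A_0,Z]$, equivalently $D$ is no square in $\Q[A_0]$. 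If instead $d_1 = 0$, then $d_0 \ne 0$, so $a_0 = c := -d_2/d_0$ is forced, the free variable is $a_1$, and the first equation becomes $z^2 = D(a_1)$ with $D(A_1) = u\Disc{X^n + a_{n-1}X^{n-1} + \ldots + a_2 X^2 + A_1 X + c}$; here Lemma~\ref{l2} supplies the irreducibility of $Z^2 - D(A_1)$. In either case the eliminated coefficient must additionally be an integer of size at most $H$, but imposing this can only shrink the solution set, so it may be dropped for an upper bound.

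It then remains to count the integral points $(z, T)$, with $T$ the surviving coefficient, on the irreducible curve $z^2 = D(T)$ subject to $|T| \le H$ and $|z| \le H^c$. Since $\deg_T D \le n$, this curve has degree depending only on $n$, and after clearing the bounded denominators of $D$ its defining polynomial has integer coefficients while remaining non-square, hence irreducible. Applying \cite[Corollary~1]{Diet2}, which realises the determinant method of Bombieri--Pila, Heath-Brown and Salberger for exactly such equations, bounds the number of these points by $H^{1/2+o(1)}$, uniformly in the parameters; combined with the two cases above this yields the claim.

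The main obstacle is this counting step, and specifically the anisotropic box: the variable $z$ is allowed to range up to $H^c$, which for large $c$ is far bigger than the range $H$ of $T$, so one cannot directly quote the isotropic Bombieri--Pila bound. The point that keeps the exponent equal to $1/2$ (rather than growing with $c$) is that the equation $z^2 = D(T)$ forces $|z| \ll H^{n/2}$ on the curve, so $z$ is essentially determined by $T$; Corollary~1 of \cite{Diet2} is designed precisely to absorb this and to deliver an exponent governed only by the degree of the curve, whose minimum value here is two, arising when $n=3$ and the constant term is the free variable. A secondary technical point is uniformity: the bound must not depend on $u$ or on $a_2, \ldots, a_{n-1}, c_0, c_1$, and this holds because the determinant-method estimates depend only on the degree of the curve and not on its coefficients.
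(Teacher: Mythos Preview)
Your proposal is correct and follows exactly the approach indicated in the paper: the paper's own proof is the single sentence preceding the lemma, namely that the result follows, as in~\cite{Diet2}, from Lemmas~\ref{l1} and~\ref{l2} together with~\cite[Corollary~1]{Diet2}, and you have simply unpacked this by performing the case split on $d_1 \ne 0$ versus $d_1 = 0$, eliminating the appropriate coefficient, invoking the relevant irreducibility lemma, and then applying the determinant-method count. Your remarks on the anisotropic box and on uniformity are accurate and are precisely why~\cite[Corollary~1]{Diet2} is the right tool here.
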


\begin{proof}
This is a straightforward generalization of~\cite[Lemma~8]{Diet2}, which
dealt with the special case $u=1$. Lemmas~\ref{l1} and~\ref{l2} 
now play the role of~\cite[Lemma~5]{Diet2} and~\cite[Lemma~6]{Diet2},
respectively. The proof can then be followed in a completely analogous
way to the proof of~\cite[Lemma~8]{Diet2}.
\end{proof}

We also need the following technical result, which generalises~\cite[Lemma~11]{Diet2}.

\begin{lemma}
\label{l4}
Let $u \in \Z \backslash\{0\}$, and let $N(H)$ be the number of coefficients 
$a_2, \ldots, a_{n-1} \in \Z$ such that
$|a_i| \le H \quad (2 \le i \le n-1)$ and the polynomial
\begin{equation}
\begin{split}
\label{eq:polyAAZ}
  Z^2-u \Disc{X^n+a_{n-1} X^{n-1} + \ldots +a_{2} X^2 + A_1 X  + A_0}\qquad &\\
  \in \Z[A_0, A_1, Z]&
\end{split}
\end{equation}
as a polynomial in $A_0, A_1, Z$ is not absolutely irreducible.
Then
$$
  N(H) \ll  H^{n-3}.
$$
\end{lemma}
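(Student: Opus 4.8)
The plan is to read the polynomial in \eqref{eq:polyAAZ} as a monic quadratic $Z^2-g$ in the variable $Z$, where $g=u\Disc{X^n+a_{n-1}X^{n-1}+\dots+a_2X^2+A_1X+A_0}\in\Z[A_0,A_1]$ and $a_2,\dots,a_{n-1}$ are the fixed integers being counted. A monic quadratic $Z^2-g$ fails to be absolutely irreducible in $\overline{\Q}[A_0,A_1,Z]$ precisely when $g$ is a perfect square in $\overline{\Q}[A_0,A_1]$, since any nontrivial factorisation must split off two factors of $Z$-degree one. Crucially, over $\overline{\Q}$ the nonzero integer $u$ is itself a square, so this condition is independent of $u$: it is equivalent to $\Phi:=\Disc{X^n+a_{n-1}X^{n-1}+\dots+a_2X^2+A_1X+A_0}$ being a square in $\overline{\Q}[A_0,A_1]$. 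Thus $N(H)$ counts the lattice points $(a_2,\dots,a_{n-1})$ with $|a_i|\le H$ for which this specialisation of $\Phi$ is an absolute square, and it suffices to show these points lie on a proper subvariety of bounded degree.

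The first step is to verify that the \emph{generic} polynomial is absolutely irreducible, i.e.\ that $\Phi$, now viewed in $\overline{\Q}(A_2,\dots,A_{n-1})[A_0,A_1]$ with $A_2,\dots,A_{n-1}$ regarded as free variables, is not a square. For this I would specialise $A_2=\dots=A_{n-1}=0$: the resulting polynomial $\Disc{X^n+A_1X+A_0}$ equals, up to an overall nonzero sign, the genuine two-term binomial $(n-1)^{n-1}A_1^{n}+n^{n}A_0^{n-1}$ of \eqref{eq:DiscTrin}, whose two monomials are distinct and have nonzero coefficients for $n\ge 3$. A two-term binomial is never a perfect square, since the square of a monomial has one monomial while the square of a non-monomial has at least three (its largest and smallest monomials square to two distinct terms, and a cross term lies strictly between them). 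Hence this specialisation is not a square in $\overline{\Q}[A_0,A_1]$. A standard content/Gauss-lemma argument in the UFD $\overline{\Q}[A_2,\dots,A_{n-1}][A_0,A_1]$ then upgrades this: were $\Phi$ a square over the function field $\overline{\Q}(A_2,\dots,A_{n-1})$, it would equal a constant times a square in $\overline{\Q}[A_0,\dots,A_{n-1}]$, and specialising $A_2=\dots=A_{n-1}=0$ would force the binomial above to be a square, a contradiction. Therefore \eqref{eq:polyAAZ} is absolutely irreducible when $A_2,\dots,A_{n-1}$ are treated as indeterminates.

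The second step is a quantitative specialisation argument. By Noether's irreducibility theorem in its effective form, whose ``Noether forms'' have degree bounded purely in terms of $\deg F=O(1)$, the set $W$ of specialisations $(a_2,\dots,a_{n-1})\in\overline{\Q}^{\,n-2}$ at which \eqref{eq:polyAAZ} ceases to be absolutely irreducible is contained in a proper Zariski-closed subset cut out by polynomials of degree $O(1)$; note that the $Z$-degree remains $2$ under any such specialisation, so no degree drop occurs and the only failure mode is $\Phi$ becoming an absolute square. In particular $\dim W\le n-3$. Finally I would invoke the standard bound that a variety of dimension at most $n-3$ and degree $O(1)$ in $\overline{\Q}^{\,n-2}$ contains $O(H^{n-3})$ integer points with $|a_i|\le H$, which gives $N(H)\ll H^{n-3}$, with the implied constant depending only on $n$ as permitted.

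The main obstacle is the second step: one must ensure that the exceptional locus is not merely proper but is confined to a hypersurface of degree bounded in terms of $n$, so that the lattice-point count delivers exactly the codimension-one saving $H^{n-2}\to H^{n-3}$. The non-squareness of the first step, by contrast, is elementary once one exploits the binomial shape \eqref{eq:DiscTrin} of the trinomial discriminant; there the only care needed is the content argument that transfers non-squareness from the specialised polynomial back to the generic one over the function field.
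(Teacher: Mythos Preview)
Your reduction to the $u$-independent statement matches the paper's: both observe that over an algebraically closed field the nonzero scalar $u$ is itself a square, so $Z^2-u\Phi$ is absolutely reducible if and only if $\Phi$ is a square in $\C[A_0,A_1]$, independently of $u$. The paper then simply cites \cite[Lemma~11]{Diet2} for the case $u=1$ and stops. You instead supply a self-contained argument for that case: specialise $A_2=\cdots=A_{n-1}=0$ to land on the trinomial discriminant~\eqref{eq:DiscTrin}, observe that a genuine two-term polynomial is never a square, and then use Noether forms to confine the bad parameters $(a_2,\dots,a_{n-1})$ to a proper closed set of degree $O_n(1)$, which carries $O(H^{n-3})$ integer points in the box. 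This is a legitimate alternative and has the virtue of making transparent why the saving is exactly one power of $H$.

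One wrinkle of phrasing: in your first step you identify ``generic absolute irreducibility'' with $\Phi$ not being a square in $\overline{\Q}(A_2,\dots,A_{n-1})[A_0,A_1]$, and your Gauss-lemma argument verifies precisely that. Strictly speaking, absolute irreducibility of the generic fibre means irreducibility over the algebraic closure of the function field, and non-squareness over the function field itself does not automatically upgrade (consider $A_2A_0^2$). This does not damage your proof, however: the Noether-forms description of the absolutely reducible locus is a closed condition of bounded degree in the parameters regardless of any hypothesis, and your trinomial specialisation exhibits a point $(0,\dots,0)$ lying outside it, which is all that is needed to conclude that the locus is proper. Equivalently, one geometrically irreducible closed fibre forces the generic fibre to be geometrically irreducible by openness of that property, so your second step in fact delivers more than you claim.
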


\begin{proof}
The special case $u=1$ is just~\cite[ Lemma~11]{Diet2}. However, if
the polynomial~\eqref{eq:polyAAZ}  factorises over $\C[A_0, A_1, Z]$,
then 
\begin{equation}
\label{eq:polyAA}
u\Disc{X^n+a_{n-1} X^{n-1} + \ldots +a_{2} X^2 + A_1 X  + A_0}\in \C[A_0, A_1]
\end{equation}
is a perfect square in $\C[A_0, A_1]$ whence, as $u \ne 0$,
also the polynomial~\eqref{eq:polyAA} with $u=1$ is a square in
$\C[A_0, A_1]$. Hence also the polynomial~\eqref{eq:polyAAZ}  with $u=1$
factorises
over $\C[A_0, A_1, Z]$. The result therefore follows immediately from
the special case $u=1$.
\end{proof}

Given a square-free integer $u\ge 1$, we denote by $\cT_n(H,u)$ the set of $f \in \cI_n(H)$  for which the square-free part of $\Delta(f)$  is $u$, 
that is, $|\Delta(f)| = r^2u$ for some integer $r\ge 1$, and by
$T_n(H,u)$ the cardinality of this set. 

\begin{lemma}
\label{lem:det}
Uniformly over   square-free integers $u \ge 1$ with the condition that
neither $u(n-1)^{n-1}$ nor $un^n$ is a square,
we have the following estimate
$$
T_n(H,u) \le H^{n-2+\sqrt{2}+o(1)}.
$$
\end{lemma}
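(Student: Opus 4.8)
The plan is to reduce the counting of $\cT_n(H,u)$ to counting integral points on the two affine surfaces
$$
S_{\pm}:\qquad Z^2=\pm u\Disc{X^n+a_{n-1}X^{n-1}+\ldots+a_2X^2+A_1X+A_0},
$$
and then to attack each surface by the determinant method exactly as in Dietmann~\cite{Diet2}. First I would note that, by Lemma~\ref{lem:D/Delta}, $\Disc{f}$ and $\Delta(f)$ differ by a perfect square, so $f\in\cT_n(H,u)$ forces $\Disc{f}=\pm u m^2$ for some integer $m$; hence each such $f$ gives an integral point on $S_{+}$ or $S_{-}$ with $|a_i|<H$ and, since $|\Disc{f}|\le H^{O(1)}$, with $|Z|\le H^{c}$ for some $c=c(n)$. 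Dropping the irreducibility requirement only increases the count, so it suffices to bound the number of such points on each of $S_{+}$ and $S_{-}$; the two surfaces are symmetric, and the relevant non-square hypotheses coincide because Lemmas~\ref{l1},~\ref{l2} and~\ref{l4} are phrased in terms of $|u|$.

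Next I would fix the middle coefficients $a_2,\ldots,a_{n-1}$, of which there are $O(H^{n-2})$ admissible choices, and for each choice view $S_{\pm}$ as a surface in the three variables $A_0,A_1,Z$. By Lemma~\ref{l4}, all but $O(H^{n-3})$ of these choices yield an absolutely irreducible surface; for the exceptional choices I would use only the trivial observation that each pair $(a_0,a_1)$ determines at most two values of $Z$, giving $O(H^2)$ points per choice and a total contribution of $O(H^{n-3}\cdot H^2)=O(H^{n-1})$, which is negligible against the claimed bound since $\sqrt2>1$.

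For each of the remaining absolutely irreducible surfaces I would apply the determinant method of Bombieri--Pila, Heath-Brown and Salberger in the form used by Dietmann~\cite{Diet2}. This covers the integral points in the box $|a_0|,|a_1|\le H$, $|Z|\le H^{c}$ by a bounded-degree family of auxiliary curves. The points on each linear component, that is on each plane section $d_0a_0+d_1a_1+d_2=0$ with $(d_0,d_1)\ne(0,0)$, are bounded by $H^{1/2+o(1)}$ thanks to Lemma~\ref{l3}, while balancing the degree of the auxiliary polynomial against this input yields the exponent $\sqrt2$; thus each such surface carries $O(H^{\sqrt2+o(1)})$ integral points. Because the degree of $S_{\pm}$ depends only on $n$ and absolute irreducibility is guaranteed uniformly by the non-square conditions, all implied constants are independent of $u$. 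Summing over the $O(H^{n-2})$ choices of middle coefficients, and adding the identical $S_{-}$ contribution, then gives
$$
T_n(H,u)\le H^{n-2}\cdot H^{\sqrt2+o(1)}+O(H^{n-1})=H^{n-2+\sqrt2+o(1)},
$$
as required.

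The main obstacle is the third step: extracting the precise exponent $\sqrt2$ from the determinant method. This requires running the method with the strongly skew box (in which $Z$ ranges up to $H^{c}$ with $c$ growing linearly in $n$), controlling the highly singular discriminant locus, and verifying that the linear sections arising in the covering are exactly those governed by the plane-section estimate of Lemma~\ref{l3}, with the higher-degree components absorbed into the main determinant-method term. Keeping every implied constant uniform in $u$ throughout is an additional bookkeeping burden that the statement explicitly demands.
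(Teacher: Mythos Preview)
Your proposal is correct and follows essentially the same route as the paper's own proof: reduce via Lemma~\ref{lem:D/Delta} to counting integral points on the surface $z^2=u\Disc{f}$ in a box, then feed this into the determinant-method machinery of~\cite[Section~5]{Diet2}, with Lemmas~\ref{l1}, \ref{l2}, \ref{l3} and~\ref{l4} replacing their $u=1$ counterparts there. Your write-up is in fact more explicit than the paper's (which simply cites~\cite[Section~5]{Diet2} wholesale), spelling out the fixing of $a_2,\ldots,a_{n-1}$, the handling of the $O(H^{n-3})$ non-absolutely-irreducible specialisations via Lemma~\ref{l4}, and the role of Lemma~\ref{l3} on the linear auxiliary sections; you are also slightly more careful about the sign of $\Disc{f}$, splitting into $S_{+}$ and $S_{-}$, whereas the paper writes only the single equation~\eqref{aug6}.
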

 
\begin{proof} 
Let us fix some $\varepsilon > 0$. 
By Lemma~\ref{lem:D/Delta}, $\Disc{f}$  and   $\Delta(f)$ 
have the same  square-free part. 
So, for square-free $u \in \Z$,  we see that $T_n(H, u)$ is the number of solutions
$a_0, \ldots, a_{n-1} \in \Z, r \in \N$ of the Diophantine equation
\begin{equation}
\label{aug6}
  r^2 u = \Disc{X^n + a_{n-1} X^{n-1} + \ldots +
  a_1 X + a_0}
\end{equation}
such that $|a_i| \le H \quad (0 \le i \le n-1)$.
On writing $z=ru$ one observes that
$T_n(H, u)$ is at most
the number of solutions $a_0, \ldots, a_{n-1} \in \Z$, $z \in \N$, of
\begin{equation}
\label{eq1}
  z^2 = u \Disc{X^n + a_{n-1} X^{n-1} + \ldots +
  a_1 X + a_0}
\end{equation}
such that $|a_i| \le H \quad (0 \le i \le n-1)$,
and that~\eqref{aug6} and the conditions $|a_i| \le H \quad
(0 \le i \le n-1)$ force
$|r| \le H^{c_1}$, $|u| \le H^{c_2}$
for some constants $c_1, c_2>0$ only depending on $n$,
so $|z| \le H^c$ for some $c \ge 1$  
depending only on $n$. 
To bound the number of these solutions, we can now, in a completely
analogous way, follow the proof from~\cite[Section~5]{Diet2},
which deals with the special case $u=1$: First, fix
$a_2, \ldots, a_{n-1}$; there are $O(H^{n-2})$ choices.
By Lemma~\ref{eq:polyAA}, we may assume that
\begin{equation}
\label{eq:poly za1a2}
  z^2 - u \Disc{X^n + a_{n-1} X^{n-1} + \ldots +
  a_1 X + a_0}
\end{equation}  
as a polynomial in $z,a_1,a_0$ is absolutely irreducible. We can
therefore apply~\cite[Lemma~12]{Diet2}, and the same calculation
as in~\cite{Diet2} shows that there exist $J \ll H^{\sqrt{2}/2+\varepsilon}$ polynomials
$g_1, \ldots, g_J \in \Z[Z,A_1,A_0]$,   
such that each $g_j$ is coprime with the polynomial~\eqref{eq:poly za1a2}
and has degree bounded only in terms of $n$ and $\varepsilon$,
and every solution $(z,a_1,a_0)$ to~\eqref{eq1}
with
\begin{equation}
\label{eq:cond a1a2z}
|a_1|, |a_0| \le H  \mand |z| \le H^c,
\end{equation}
in addition
satisfies $g_j(z,a_1,a_0)=0$ for some $j \in \{1, \ldots, J\}$, apart possibly from
some exceptional set of solutions of cardinality
at most $H^{\sqrt{2}+o(1)}$.
So we have to consider $J$ systems of
two Diophantine equations, each consisting of~\eqref{eq1} and the equation 
$g_j(z,a_1,a_0)=0$ for some $j \in \{1, \ldots, J\}$. 

Fix any of those systems.
Then it is enough to show that there are at most
$H^{\sqrt{2}/2+o(1)}$ integer solutions  satisfying~\eqref{eq:cond a1a2z} 
to this system.  To this end, as in~\cite{Diet2}, we can
eliminate $z$ from the system, resulting in one Diophantine equation
$f_j(a_1, a_0)=0$, where $f_j \in \Z[A_1, A_0]$, which is a  non-zero rational polynomial
by the coprimality of $g_j$ and~\eqref{eq:poly za1a2}.
This can be factored over $\Q$, and as in~\cite {Diet2}, for each factor that is at least quadratic,
the bound of  Bombieri and Pila~\cite{BoPi} yields at most  $H^{1/2+o(1)}$ integer solutions with 
$|a_1|,|a_0| \le H$, which is more than satisfactory,
as from~\eqref{eq1}, for each pair $(a_1, a_0)$, we get
at most two solutions $z$.
The case of
linear factors is covered by Lemma~\ref{l3}, again yielding at most 
$H^{1/2+o(1)}$ solutions satisfying~\eqref{eq:cond a1a2z}. 

All together,
over all $J \ll H^{\sqrt{2}/2+\varepsilon}$ systems, and
considering the exceptional set, we obtain at most
$$
H^{\sqrt{2}+o(1)} + H^{\sqrt{2}/2+\varepsilon}
  H^{1/2+o(1)} =  H^{\sqrt{2}+o(1)}
$$ 
integer solutions with~\eqref{eq:cond a1a2z}, 
provided that $\varepsilon < \sqrt{2} -1$. 
Taking into account the
$O(H^{n-2})$ choices for $a_2, \ldots, a_n$ from the
beginning, we obtain
$$
T_n(H,u) \le H^{n-2+\sqrt{2}+o(1)},
$$ as required. 
\end{proof}

By  Lemma~\ref{lem:D/Delta}
we have
$$
N_n(H,\Delta)\le T_n(H,u),
$$
where $u$ is the square-free part of $\Delta$, and using Lemma~\ref{lem:det}, we now obtain the bound~\eqref{eq:nn-1 discr}.

\subsection{The bound~\eqref{eq:Any discr}: the square-sieve method}
\label{sec:SqSieve}

We recall the definitions of   $\cT_n(H,u)$ and 
$T_n(H,u)= \# \cT_n(H,u)$ from Section~\ref{sec:DetMethod}.

\begin{lemma}
\label{lem:sieve}
Uniformly over   square-free integers $u \ge 1$   we have the following estimate
$$
T_n(H,u) \ll  \begin{cases}H^{n-2n/(3n+3)}(\log H)^{(5n+1))/(3n+3)} & \text{if}\ n \ge 5.\\
 H^{n-n/(2n-1)}(\log H)^{(3n-2)/(2n-1)} & \text{if}\ n =3,4.
\end{cases} 
$$
\end{lemma}

\begin{proof} 
As before, by Lemma~\ref{lem:D/Delta}, $\Disc{f}$  and   $\Delta(f)$ 
have the same  square-free part, and thus $T_n(H,u)$ is 
the number of polynomials 
$f \in \cI_n(H)$  for which the square-free part of $\Disc{f}$  is $u$. 

We now apply the  square sieve of Heath-Brown~\cite{HB1} to 
the discriminants $\Disc{f}$ of polynomials  $f \in \cI_n(H)$.

Take now a real $z\ge 2$ and denote by $\cQ_z$ the set of all primes $p$ in the interval $(z,2z]$ and by $\pi(z,2z)$ the cardinality of this set, that is $\pi(z,2z)=\pi(2z)-\pi(z)$ where, as usual, $\pi(x)$ 
is the number of primes $p\le x$.

Clearly, for any  $f\in\cT_n(H,u)$ the product $u\Disc{f}$ is a perfect square, and thus,  
for a prime $p\ge 3$ we have
$$
\(\frac{u\Disc{f}}{p}\) = 1,
$$
unless $p\mid u\Disc{f}$, or equivalently  $p\mid \Disc{f}$ (as $u \mid  \Disc{f}$), 
in which case we have 
$$
\(\frac{u\Disc{f}}{p}\) = 0. 
$$

Note that the condition $f \in \cT_n(H,u) \subseteq \cI_n(H)$ 
automatically implies that $\Disc{f}\ne 0$.

Hence, for any $f\in\cT_n(H,u)$ we have
\begin{equation}
\label{eq:sieve}
\sum_{p\in\cQ_z}\(\frac{u\Disc{f}}{p}\)= \pi(z,2z) +  O\(\omega\(\Disc{f}\)\),
\end{equation} 
where $\omega(d)$ is the number of prime divisors of the integer
$d \ne 0$. 

Since $f\in\cI_n(H)$, we trivially have $\Disc{f} = H^{O(1)}$. 
Now, using the trivial bound $\omega(d) = O(\log d)$ 
and imposing the restriction 
\begin{equation}
\label{eq:z log H}
z\ge (\log H)^2,
\end{equation}
we see from the prime number theorem that 
\begin{equation}
\label{eq:z omega}
\pi(z,2z) +  O\(\omega\(\Disc{f}\)\)\ge \frac{1}{2} \pi(z,2z) 
\end{equation} 
provided that $H$ is large enough (certainly~\eqref{eq:z log H}
can be substantially relaxed, but this does not affect our result).  

Hence, from~\eqref{eq:sieve} and~\eqref{eq:z omega}
we conclude
$$
\frac{2}{\pi(z,2z)} \sum_{p\in\cQ_z}\(\frac{u\Disc{f}}{p}\) \ge 1.
$$
Squaring, summing over all $f\in\cT_n(H,u)$ and then expanding the summation 
to  all $f\in\cP_n(H)$, we obtain
\begin{align*}
T_n(H,u)&\le\frac{4}{\pi(z,2z)^2}\sum_{f\in\cT_n(H,u)}\left|\sum_{p\in\cQ_z}\(\frac{u\Disc{f}}{p}\)\right|^2\\
&\le \frac{4}{\pi(z,2z)^2}\sum_{f\in\cP_n(H)}\left|\sum_{p\in\cQ_z}\(\frac{u\Disc{f}}{p}\)\right|^2.
\end{align*}
Now, expanding the square and then changing the order of summation and using 
the multiplicativity of the Jacobi symbol,
we derive
\begin{equation}
\begin{split}
\label{eq:Cauchy T}
T_n(H,u) &\le \frac{4}{\pi(z,2z)^2}\sum_{f\in\cP_n(H)} \sum_{p,q\in\cQ_z}
\(\frac{u\Disc{f}}{pq}\)\\
& = \frac{4}{\pi(z,2z)^2}\sum_{p,q\in\cQ_z}\(\frac{u}{pq}\)
\sum_{f\in\cP_n(H)}\(\frac{\Disc{f}}{pq}\).
\end{split}
\end{equation}
Hence
$$
T_n(H,u)\ll  \frac{1}{\pi(z,2z)^2}\sum_{p,q\in\cQ_z}
\left|\sum_{f\in\cP_n(H)}\(\frac{\Disc{f}}{pq}\)\right|.
$$

If $n \ge 5$ we apply now the first bound of Lemma~\ref{lem:CharSum m H}  for the inner sum for $O(\pi(z,2z)^2)$ primes $p\ne q$ and the trivial bound $H^n$ for $\pi(z,2z)$ choices of primes $p=q$. Taking also into consideration that 
$$
\pi(z,2z) \gg \frac{z}{\log z}
$$
and $pq\le  4z^2$, we derive
\begin{align*}
T_n(H,u)& \ll z^{-1}H^n\log z+(H/z^2)^{n-1} z^{(3n+1)/2}
 \log z
+z^{(3n+1)/2}  (\log z)^n\\
& \ll z^{-1}H^n\log z+H^{n-1} z^{-n/2+5/2}
 \log z
+z^{(3n+1)/2}  (\log z)^n.
\end{align*} 
Choosing $z=H^{2n/(3n+3)}(\log H)^{-2(n-1)/(3n+3)}$, thus the condition~\eqref{eq:z log H} is
satisfied, we
obtain the desired bound.  

For $n =3,4$, we apply now the second bound of  Lemma~\ref{lem:CharSum m H}  for the inner sum for $O(\pi(z,2z)^2)$ primes $p\ne q$ and the trivial bound $H^n$ for $\pi(z,2z)$ choices of primes $p=q$. Taking also into consideration that 
$$
\pi(z,2z) \gg \frac{z}{\log z}
$$
and $pq\le  4z^2$, we derive
$$
T_n(H,u)\ll z^{-1}H^n\log z+H^{n-1} \log z
+z^{2n-2}(\log z)^n.
$$
Choosing $z=H^{n/(2n-1)}(\log H)^{-(n-1)/(2n-1)}$, thus the condition~\eqref{eq:z log H} is
satisfied, we conclude the proof. 
\end{proof}

As before, by  Lemma~\ref{lem:D/Delta}
we have
\begin{equation}
\label{eq: N and T}
N_n(H,\Delta)\le T_n(H,u),
\end{equation}
where $u$ is the square-free part of $\Delta$,  and using Lemma~\ref{lem:sieve}, we now obtain the bound~\eqref{eq:Any discr} and conclude the proof of Theorem~\ref{thm:NHDelta}. 

\section{Proof of Theorem~\ref{thm:MHD}}
\subsection{Bounds of mean of sums of Jacobi symbols}

We also make use of the following   bounds of character sums ``on
average'' over square-free moduli which are due to
Heath-Brown~\cite[Corollary 3.]{HB2}. In fact we only need a very special case 
of this result, which we present in the following form. 

\begin{lemma}
\label{lem:MeanVal} For all real positive numbers $D\ge 1$ and $Z\ge 1$, such that
$DZ\to \infty$, 
$$
\frac{1}{Z} \sum_{\substack{m \le Z\\ m~\text{odd square-free}}} \left |
\sum_{|\Delta| \le D} 
\(\frac{\Delta}{m}\)\right|^2 \le (DZ)^{o(1)}\sqrt{D(D/Z+1)}. $$
\end{lemma}

\subsection{Optimization of power sums}
We need the following technical result, see~\cite[Lemma~2.4]{GrKol}.

\begin{lemma}
\label{lem: Optim}
For $I,J\in\mathbb{N}$ let
$$
F(Z)=\sum_{i=1}^I A_i Z^{a_i}+\sum_{j=1}^JB_j Z^{-b_j},
$$
where $A_i,B_j,a_i$ and $b_j$ are positive for $1\le i\le I$ and $1\le j \le J$. Let $0\leq Z_1\leq Z_2$. Then there is some $Z\in [Z_1,Z_2]$ with
$$
F(Z)\ll \sum_{i=1}^I\sum_{j=1}^J
\(A_i^{b_j}B_j^{a_i}\)^{1/(a_i+b_j)}+\sum_{i=1}^I A_iZ_1^{a_i}+\sum_{j=1}^J B_jZ_2^{-b_j},
$$
where the implied constant depends only on $I$ and $J$.
\end{lemma}

\subsection{Concluding the proof}
Using~\eqref {eq: N and T} and also that 
 $$
\(\frac{u}{pq}\) = \(\frac{\Delta}{pq}\), 
$$
 where $u$ is the square-free part of $\Delta$, we can see that the bound~\eqref{eq:Cauchy T} implies 
$$
M_n(H,D) \le \frac{4}{\pi(z,2z)^2}\sum_{p,q\in\cQ_z} \sum_{|\Delta| \le D} \(\frac{\Delta}{pq}\)
\sum_{f\in\cP_n(H)}\(\frac{\Disc{f}}{pq}\).
$$
Hence
$$
M_n(H,D) \ll \frac{1}{\pi(z,2z)^2}\sum_{p,q\in\cQ_z} 
\left|\sum_{|\Delta| \le D} \(\frac{\Delta}{pq}\)\right|
\left| \sum_{f\in\cP_n(H)}\(\frac{\Disc{f}}{pq}\)\right|.
$$
Continuing as in Section~\eqref{sec:SqSieve}, and separating the contribution 
from the terms with $p=q$,  we obtain 
\begin{align*}
M_n(H,D) \ll z^{-1}& DH^n   \log z \\
&+  \frac{1}{\pi(z,2z)^2}\sum_{\substack{p,q\in\cQ_z\\p\ne q}} 
\left|\sum_{|\Delta| \le D} \(\frac{\Delta}{pq}\)\right|
\left| \sum_{f\in\cP_n(H)}\(\frac{\Disc{f}}{pq}\)\right|.
\end{align*}
 
 If $n \ge 5$ we apply now the first bound of Lemma~\ref{lem:CharSum m H} 
  for the inner sum and then the bound of Lemma~\ref{lem:MeanVal}, and thus derive 
  (after replacing all power logarithms with $H^{o(1)}$ 
\begin{align*}
M_n(H,D) \ll z^{-1}& DH^{n+o(1)}    \\
&+  H^{o(1)}  \(\(H/z^2\)^{n-1}  +1\)  z^{(3n+1)/2}\sqrt{D  (D/z^2+ 1)}
\end{align*}
After some trivial manipulations,  we obtain 
\begin{equation}
\label{eq: M and M}
M_n(H,D) \ll  H^{o(1)} \cM , 
  \end{equation}
where 
\begin{align*}
\cM = z^{-1} DH^{n}   + z^{-(n-3)/2} DH^{n-1} & + z^{-(n-5)/2} D^{1/2} H^{n-1}\\
&  \quad +   z^{(3n-1)/2} D +  z^{(3n+1)/2} D^{1/2} 
\end{align*} 

Since we obviously have  $z^{-1} DH^{n}  \ge  z^{-(n-3)/2} DH^{n-1}$ 
we can simplify the above bound as 
\begin{align*}
\cM & \ll  z^{-1} DH^{n}  + z^{-(n-5)/2} D^{1/2} H^{n-1} +   z^{(3n-1)/2} D +  z^{(3n+1)/2} D^{1/2} \\
&= \(z^{-1} D^{1/2} H^{n}  + z^{-(n-5)/2}   H^{n-1} +   z^{(3n-1)/2} D^{1/2}  +  z^{(3n+1)/2}\) D^{1/2}.
\end{align*}
We now apply Lemma~\ref{lem: Optim} with $I = J=2$, $Z=(DH)^{100} $, $Z_1=(\log H)^2$ (see~\eqref{eq:z log H}), $Z_2=(DH)^{100}$ and parameters
\begin{align*}
& (A_1,a_1)     = \(D^{1/2} , (3n-1)/2\) , \qquad &(A_2,a_2)     = \(1 , (3n+1)/2\) , \quad \\
&(B_1,b_1) = (D^{1/2} H^{n},1), \qquad  & (B_2,b_2) = ( H^{n-1}, (n-5)/2).
\end{align*}
We now compute
\begin{align*}
\( A_1^{b_1} B_1^{a_1}\)^{1/(a_1+b_1)}  &= \(D^{1/2} \(D^{1/2} H^{n}\)^{(3n-1)/2} \)^{2/(3n+1)}\\
 &= D^{1/2}  H^{n(3n-1)/(3n+1)},\\
 \( A_1^{b_2} B_2^{a_1}\)^{1/(a_1+b_2)}  &= \(D^{ (n-5)/4} \( H^{n-1}\)^{(3n-1)/2} \)^{1/(2n-3)}\\
 &=D^{ (n-5)/(8n-12)} H^{(n-1)(3n-1)/(4n-6)},\\
 \( A_2^{b_1} B_1^{a_2}\)^{1/(a_2+b_2)}  &= \(  \(D^{1/2} H^{n}\)^{(3n+1)/2} \)^{2/(3n+3)}\\
 &= D^{(3n+1)/(6n+6)}  H^{n(3n+1)/(3n+3)},\\
  \( A_2^{b_2} B_2^{a_2}\)^{1/(a_2+b_2)}  &= \(  \( H^{n-1}\)^{(3n+1)/2} \)^{1/(2n-2)}\\
 &=    H^{(3n+1)/4}.
\end{align*}
Certainly the contribution from the terms involving $Z_1$ and $Z_2$ is negligible. 
We also note that for $n \ge 5$ we have
$$
 D^{1/2}  H^{n(3n-1)/(3n+1)} \ge  H^{n(3n-1)/(3n+1)} \ge H^{(3n+1)/4}. 
 $$
  Hence the last term $H^{(3n+1)/4}$ can be omitted.
Furthermore,  for $n \ge 5$ we also have
 $$
\frac{n-5}{8n-12} \le \frac{3n+1}{6n+6} \mand  \frac{(n-1)(3n-1)}{4n-6} \le \frac{n(3n+1)}{3n+3}. 
 $$
  Hence the second term $D^{ (n-5)/(8n-12)} H^{(n-1)(3n-1)/(4n-6)}$ can be omitted too.
Thus we obtain 
$$
\cM   \ll    D   H^{n(3n-1)/(3n+1)}   +   D^{(3n+1)/(6n+6)+1/2}  H^{n(3n+1)/(3n+3)}  .
$$
Recalling~\eqref{eq: M and M} we obtain 
$$
M_n(H,D) \le   D  H^{n(3n-1)/(3n+1)+o(1)}   + D^{(3n+2)/(3n+3)}  H^{n(3n+1)/(3n+3)+o(1)} . 
$$

We now observe that the second  term improved the trivial bound
$M_n(H,D) \ll H^n$ only for $D \le H^{2n/(3n+2)}$, in which case the second  term also dominates 
the first term as 
$$
D H^{n(3n-1)/(3n+1)}  \le  D^{(3n+2)/(3n+3)} H^{n(3n+1)/(3n+3)} 
$$
is equivalent to $D \le  H^{4n/(3n+1)}$. 
The desired result now follows.

\section{Proof of Theorem~\ref{thm:trinom}}

Write $H=A+B+C+D$. There are $O(A)$ choices for $a$, so it suffices to
show that for fixed $a \in [C, C+A]$ there are at most $H^{o(1)}$ solutions
$(b,r) \in \Z^2$, $b \in [D, D+B]$ to the equation
$$
  ur^2-n^n b^{n-1} = (n-1)^{n-1} a^n.
$$
As $n \equiv 1 \pmod 4$, substituting $t=b^{(n-1)/2}$, it is enough to
uniformly in $a$ bound the number of $r, t \in \Z$,
$|r|, |t| \ll  H^{n/2}$, such that
\begin{equation}
\label{tag}
  ur^2-n^nt^2 = (n-1)^{n-1}a^n.
\end{equation}
Note that $(n-1)^{n-1} a^n \ne 0$ as $n>1$ and $a \in [C, C+A]$ where
$C \ge 1$. If $-un^n$ is a square in $\Z$, then we can
factor the left hand side of~\eqref{tag} and use the divisor function
estimate $\tau(m)= m^{o(1)}$ for all $m \in \Z
\backslash \{0\}$ to see that~\eqref{tag} has at most $H^{o(1)}$
solutions $r, t \in \Z$. If $-un^n$ is no square, then the left hand
side of~\eqref{tag} is a Pellian type equation and though~\eqref{tag} has possibly infinitely many solutions $r, c \in \Z$,
the number of solutions such that $|r|, |t| \ll H^{n/2}$ by a familiar
result can be bounded by   $H^{o(1)}$, see, for example,~\cite[Lemma~3]{KonShp} 
for arbitrary quadratic polynomials.

\section{Concluding Comments}
\label{sec:comm}

\subsection{Discriminants of splitting fields of polynomials} 
As mentioned in Remark~\ref{rem:disc split}, it is certainly interesting to count the discriminants 
of splitting fields of polynomials $f \in \cI_n(H)$. 
Unfortunately our basic tool, Lemma~\ref{lem:D/Delta},
does not generalise to the discriminants of these fields.
Motivated by this and also by an apparently terminological 
oversight at the beginning of~\cite[Section~1]{ABZ}
(where $\Delta(f)$ is called the discriminant of the 
splitting field of $f$), we give two examples showing that 
such a direct analogue of  Lemma~\ref{lem:D/Delta} is
false.

In particular, for the polynomial $f(X) = X^4-2$ it is easy to check
that the splitting field $L$ of $f$ over $\Q$ is given by
$L=\Q(\sqrt[4]{2}, i)$ and that $|L:\Q|=8$.
Further, it is not hard to see that $\sqrt[4]{2}(1+2i)$ satisfies the
equation $F(\sqrt[4]{2}(1+2i))=0$, where $F(X)$ is the degree $8$
polynomial $F(X)=X^8+28 X^4+2500$ which is irreducible in
$\Q[X]$. Hence $L=\Q\(\sqrt[4]{2}(1+2i)\)$. Using the discriminant formula
for trinomials, one finds that $\Disc{f}=-2^{11}$ and
$\Disc{F}=2^{62} \cdot 3^8 \cdot 5^{12}$. As $\Disc{f}<0$ and
$\Disc{F}>0$, by Lemma~\ref{lem:D/Delta} the ratio of
$\Disc{f}$ and the discriminant $\Delta$ of $L$ is not a rational
square (in fact, using for example {\sl Sage}, one can check that
$\Delta=2^{24}$, so $\Delta/\Disc{f}=-2^{13}$; see also
Global Number Field 8.0.16777216.2 in~\cite{lmfdb}).

A slightly more complicated non-binomial example is given by 
the polynomial $f(X) = X^4-X-1$.
{\sl Magma} computes the defining polynomial of the
splitting field of $f$  as
\begin{align*}
F(X) & =X^{24}+ 90X^{21} - 70X^{20} + 5695X^{18} - 18690X^{17} + 34895X^{16}\\
& \qquad +
 225900X^{15} - 1544060X^{14} + 3867780X^{13} + 18840027X^{12} \\
& \qquad- 62876100X^{11} +
 228621050X^{10} - 222888810X^{9} \\
& \qquad+ 999415025X^{8} + 9907474500X^{7}-
 24575577355X^{6} \\
& \qquad+ 34467394920X^{5} + 232838692457X^{4}- 705674357100X^{3}\\
& \qquad +
 2030693398335X^{2} - 2155371295770X + 1779496656001.
\end{align*}
Since $\Disc{f} = 283$ and 
\begin{align*}
 \Disc{F} & = 2^{144} \cdot {3}^{24} \cdot {17}^{8} \cdot {37}^{4} \cdot {73}^{2} 
 \cdot {83}^{2} \cdot {101}^{2} \cdot {181}^{2}
  \cdot {227}^{2} \cdot {283}^{12}\\
  & \qquad  \cdot {359}^{4} \cdot {8867}^{8} \cdot {9473}^{2} \cdot {47777}^{4} \cdot {1271971}^{2} \cdot {1660069}^{4} \\
  & \qquad \cdot {970293859}^{2} \cdot {4552394491}^
  {2}  \cdot {857054278934851321}^{2}\\
  & \qquad  \cdot {1521484680115687561}^{2},
\end{align*}
the presence of the even power of $283$ in the prime number factorisation 
of $ \Disc{F}$ and Lemma~\ref{lem:D/Delta} 
show that the ratio of  $\Disc{f}$ and the discriminant of the splitting field
is not a rational square.

We note that both approaches, via the determinant method and via the square sieve are 
flexible enough to admit   
several variations in the way we count polynomials. For example,
one can fix some of the coefficients,  or make them run in a non-cubic box, 
$[-H_0, H_0]\times \ldots \times [-H_{n-1},H_{n-1}]$, or move the boxes away from the origin, 
as in Section~\ref{sec:discr trinom}. 

\subsection{Discriminants of polynomials} 
It is also natural to ask about the number $D_n(H)$ of  distinct discriminants that are generated by all polynomials from 
$\cI_n(H)$.  It  is reasonable   to expect $D_n(H) = H^{n+o(1)}$, however 
this question seems to be open.
We briefly note that trinomials immediately imply 
$D_n(H) \gg H^{2}$.
Indeed, we consider the discriminants
$$
\Disc{X^n + aX - b}  = (-1)^{(n-1)(n+2)/2}((n-1)^{n-1} a^n+n^n b^{n-1})
$$
of trinomials $X^n + aX - b$ (see for example,~\cite[Theorem~2]{Swan}) with 
$$ H/2 \le a \le H \mand  1 \le b \le \frac{H}{3n}
$$
with the additional condition 
$$
a\equiv 0 \pmod 2 \mand b\equiv 2 \pmod 4
$$
to guarantee the irreducibility by the Eisenstein criterion.  
We claim all such pairs $(a,b)$ generate distinct discriminants. 
Indeed,  if 
$$(n-1)^{n-1} a_1^n+ n^n b_1^{n-1} = (n-1)^{n-1} a_2^n+ n^n b_2^{n-1} 
$$
then for $a_1=a_2$ we also have $b_1 = b_2$. So we can now assume that 
$a_1 > a_2$. 
In this case  we obtain 
\begin{align*}
(n-1)^{n-1} a_1^n -  (n-1)^{n-1} a_2^n & \ge (n-1)^{n-1} a_1^n -  (n-1)^{n-1} (a_1-1)^n\\
&\ge  n (n-1)^{n-1} (H/2)^{n-1}  +O(H^{n-2}) \\
& = 2^{-n+1} n (n-1)^{n-1} H^{n-1}  +O(H^{n-2})
\end{align*}
while 
$$
n^n b_2^{n-1} - n^n b_1^{n-1}  \le  n^n b_2^{n-1} \le 3^{-n+1} n H^{n-1}
$$
which is impossible for a sufficiently large $H$. 

Unfortunately, this argument does not give the lower bound $H^{2+o(1)}$ for the number of distinct discriminants of fields generated by roots of polynomials in $\cI_n(f)$, improving Corollary~\ref{cor:distinct disc}, since having distinct discriminants of polynomials does not imply necessarily distinct discriminants of fields.

Finally, we note that our methods can also be used to investigate the discriminants of the fields 
generated by some other special families of polynomials.  For example,  one of such families is 
given by quadrinomials $X^n + aX^2 +bX + c$ 
for the discriminant of which an explicit formula has been given by Otake and Shaska~\cite{OtSh}. 

\section{Appendix}
\label{sec:app}

\subsection{Preliminary discussion} 
We use this opportunity to fix an error in~\cite{Diet2}. 
Namely~\cite[Lemmas~5 and~6]{Diet2}  (and consequently~\cite[Lemma~8]{Diet2})
there are not correct as stated if the degree $n$ is of the form
$n=\newu^2$ or $n=\newu^2+1$ for some odd $m$,
and therefore~\cite[Lemma~8]{Diet2} cannot always
be directly applied in these cases as well.  This does not affect the
main results~\cite[Theorems~1 and~2]{Diet2}
in these cases, so let us
quickly explain how to amend the proof:

\subsection{The case of $n=\newu^2$}
If $n=\newu^2$
for odd $\newu$, then we can directly handle the contribution of $a_n$ 
such that $z^2-\Delta(a_1, \ldots, a_n)$ is reducible:
\cite[Lemma~6]{Diet2} as well as
\cite[Lemma~5]{Diet2} in the case of $c_1 \ne 0$ are still
correct. As a substitute for~\cite[Lemma~5]{Diet2} 
for $c_1=0$, we can use~\cite[Satz 1]{Hering0}
(see also~\cite[Section~1]{Hering}).
The latter result shows that 
for fixed $a_1, \ldots, a_{n-2} \in \Z$, there are,
uniformly in $a_1, \ldots, a_{n-2}$, only finitely many
rational specialisations for
$a_{n-1}$, for which the resulting polynomial
$f(X)=X^n+a_1 X^{n-1}+\ldots+a_n$, regarded as a polynomial in
$\Q(a_n)[X]$, does not have
Galois group $S_n$ over the rational function field $\Q(a_n)$.
Only in these cases $z^2-\Delta(a_1, \ldots,
a_n)$, as a polynomial in $z$ and $a_n$,
can be reducible over $\Q$, since otherwise having Galois group
$S_n$ over $\Q(a_n)$ excludes the
possibility that the discriminant $\Delta(a_1, \ldots, a_n)$ is a
square in $\Q(a_n)$. Therefore
there can be only $O(1)$ many exceptional `bad planes' given 
by~\cite[Equation~(5)]{Diet2}, for which the bound
in~\cite[Lemma~8]{Diet2} does not hold true. Just using the trivial bound
$O(H)$ for the number of solutions in these cases
instead of the bound provided by~\cite[Lemma~8]{Diet2}
is acceptable, as  
 the resulting bound of $O(H^{n-2})$ (for
fixing $a_1, \ldots, a_{n-2}$) times $O(1)$ (for the number of exceptional
`bad planes')
times $O(H)$ (trivially bounding the solutions instead
of using the bound from~\cite[ Lemma~8]{Diet2})
is certainly $H^{n-2+\sqrt{2}+o(1)}$. This fixes the error
for $n=\newu^2$ and odd $\newu$.

 \subsection{The case of $n=\newu^2+1$}
If $n=\newu^2+1$ for odd $\newu \ge 3$, then $n$ cannot
be divisible by $3$.   
In this case, at the outset instead of
fixing $n-2$ coefficients $a_1, \ldots, a_{n-2}$ we fix $n-2$ coefficients $a_1, \ldots, a_{n-4}, a_{n-2},
a_{n-1}$ instead.
As a substitute for~\cite[Lemma~5]{Diet2} in the case of
$c_1 \ne 0$ we prove the following result. 
\begin{lemma}
Let $\newu \ge 3$ be an odd integer, and let $n=\newu^2+1$. Further, let
$a_1, \ldots, a_{n-4}, a_{n-2}, a_{n-1}$ be fixed integers, and
let $c_1, c_2 \in \Q$ with $c_1 \ne 0$. Then the polynomial
$$
  z^2-\Delta(a_1, \ldots, a_{n-4}, c_1 a_n + c_2, a_{n-2}, a_{n-1}, a_n)
$$
is irreducible in $\Q[z, a_n]$.
\end{lemma}
\begin{proof}
We use the observation that for fixed
$a_1, \ldots, a_{n-4}, a_{n-2}, a_{n-1}$, the discriminant
$\Delta(a_{n-3}, a_n)=\Delta(a_1, \ldots, a_n)$ as a polynomial
in $a_{n-3}$ and $a_n$ is of the form  
\begin{equation}
\label{master}
  \Delta(a_{n-3}, a_n)=(n-3)^{n-3}3^3 a_{n-3}^n a_n^2 +
  \Phi(a_{n-3}, a_n),
\end{equation}
where $\Phi$ has total degree strictly less than $n+2$. The proof is
analogous to that of~\cite[Lemma~4]{Diet2}, using the fact that
$\Delta(a_1, \ldots, a_n)$ is a weighted-homogeneous polynomial in the
$a_i$, each $a_i$ having weight $i$, and the total weight of
$\Delta(a_1, \ldots, a_n)$ is $n(n-1)$. Therefore, 
for fixed $a_1, \ldots, a_{n-4}, a_{n-2}, a_{n-1}$
any monomial
$a_{n-3}^\alpha a_n^\beta$ occurring in $\Delta(a_1, \ldots, a_n)$
satisfies
\begin{equation}
\label{yacht}
  (n-3)\alpha+n\beta \le n(n-1).
\end{equation}
For $\alpha=n$ and $\beta=2$ the left hand side of~\eqref{yacht} just
equals $n(n-1)$, whence the monomial
$\delta_n a_{n-3}^n a_n^2$ occurs in
$\Delta(a_{n-3}, a_n)$, with a constant $\delta_n$ only depending on $n$; 
note that we do not yet know whether $\delta_n \ne 0$.
To establish~\eqref{master} it is therefore enough to check that this is
the only
solution of~\eqref{yacht} with $\alpha+\beta \ge n+2$, and then to
evaluate $\delta_n$. If $\alpha+\beta \ge n+2$ and
$\beta \ge 3$, then
\begin{align*}
(n-3)\alpha+n\beta & \ge (n-3)(n+2-\beta)+n\beta\\
  & = (n-3)(n+2)+3\beta\\
  & = n(n-1)-6+3\beta  \ge n(n-1)+3.
\end{align*}
If $\beta \le 1$, then $\alpha+\beta \ge n+2$ gives
$\alpha>n$, which is impossible, because
the maximum power of any $a_i$ occurring in any monomial of
$\Delta(a_1, \ldots, a_n)$ is at most $n$. The latter is
easily checked by writing the discriminant $\Delta(a_1, \ldots, a_n)$
in the form
$$
  \Delta(a_1, \ldots, a_n)=(-1)^{n(n-1)/2} \Res{f, f'},
$$
(see the formula~\eqref{eq:D vs R}),
where $f=X^n+a_1 X^{n-1} + \ldots + a_n$, expressing the resultant
$\Res{f,f'}$ of $f$ and its derivative $f'$ by the Sylvester 
formula as a certain determinant in $a_1, \ldots, a_n$, and checking
that each $a_i$ occurs in at most $n$ columns. Hence
$$
  \Delta(a_{n-3}, a_n)=\delta_n a_{n-3}^n a_n^2+\Phi(a_{n-3}, a_n),
$$
where $\Phi$ has total degree less than $n+2$. To determine the
value of $\delta_n$ (which only depends on $n$ as remarked above),
we observe that,
as $n$ is coprime to $3$, 
the trinomial $X^n+aX^3+b$ has discriminant
$$
  (-1)^{n(n-1)/2} b^2 (n^n b^{n-3} + (-1)^{n+1} (n-3)^{n-3} 3^3 a^n)
$$ 
(see, for example,~\cite[Theorem 2]{Swan}), which immediately yields
$$
  \delta_n=(-1)^{n(n-1)/2+n+1} (n-3)^{n-3} 3^3 = (n-3)^{n-3} 3^3
$$
as $n=\newu^2+1 \equiv 2 \pmod 4$.
Having established~\eqref{master}, we see that for $n$ coprime to $3$ the number
$(n-3)^{n-3}3^3$ cannot be a square, whence
\begin{align*}
  & z^2-\Delta(a_1, \ldots, a_{n-4}, c_1a_n+c_2, a_{n-2}, a_{n-1}, a_n)\\
  & = z^2 - (n-3)^{n-3}3^3 c_1^n a_n^{n+2} + O(a_n^{n+1})
\end{align*}
is irreducible in $\Q[z, a_n]$.
\end{proof}
The special cases that $a_{n-3}$ or $a_n$ are being fixed
(substitutes for the analogues of~\cite[Lemma~5]{Diet2} where
$c_1=0$, and~\cite[Lemma~6]{Diet2}, respectively)
 can be handled
as above by the result of Hering~\cite{Hering0}, again using that
$n$ is coprime to $3$.
The argument can then be
finished as above,
 using the main result of~\cite{Smith} 
 instead of~\cite[ Lemma~10]{Diet2} to see that for $n$ coprime to $3$ the
polynomial $X^n+aX^3+b$ has Galois group
$S_n$ over any function field $K(a,b)$ where $K$ is any field of
characteristic zero.

\section*{Acknowledgement}

The authors are grateful to Nicholas Katz for valuable discussions regarding several issues about  discriminants of number fields 
and also for providing the second example of Section~\ref{sec:comm}. 
The authors also  would like to thank the referee for the 
careful reading of the paper and several valuable suggestions
improving the exposition of the paper.

During the preparation of this work, A.~O. was supported by the
ARC Grant DP180100201
and I.~S.   was  supported   by the ARC Grant DP170100786.

\end{document}